\documentclass[USenglish]{article}	

\usepackage[utf8]{inputenc}				
\usepackage[big,online]{dgruyter}	
\usepackage{lmodern} 
\usepackage{microtype}
\usepackage[numbers,square,sort&compress]{natbib}

\usepackage{amsfonts}
\usepackage{amssymb}
\usepackage{array}
\usepackage{graphicx}
\usepackage{color}
\usepackage{stmaryrd}
\usepackage{xifthen}
\usepackage[normalem]{ulem}

\theoremstyle{dgthm}
\newtheorem{theorem}{Theorem}

\newtheorem{proposition}{Proposition}
\newtheorem{lemma}{Lemma}

\theoremstyle{dgdef}

\newtheorem{remark}{Remark}

\newcommand{\T}{\mathcal{T}}
\newcommand{\piola}{\mathcal{P}}

\newcommand{\VT}[1][]{ {\bf VT} ({\ifthenelse{\equal{#1}{}}{\gamma}{#1}}) }
\newcommand{\HT}[1][]{ {\bf HT}^{\ifthenelse{\equal{#1}{}}{1}{#1}}(\gamma) }
\newcommand{\HTS}[1][]{ {\bf HT}^{\ifthenelse{\equal{#1}{}}{1}{#1}}(\mathcal S) }

\newcommand{\F}{\mathcal{F}}
\newcommand{\HH}{{\bf H}}
\newcommand{\A}{{\bf A}}
\newcommand{\I}{{\bf I}}
\newcommand{\B}{{\bf B}}
\newcommand{\ebp}{{\bf e}_{\bp}}
\newcommand{\G}{\mathcal{G}}

\newcommand{\bP}{{\bf P}}
\newcommand{\bp}{{\bf p}}

\newcommand{\bq}{{\bf q}}
\newcommand{\bQh}{{\bf Q}_h}
\newcommand{\bQ}{{\bf Q}}
\newcommand{\tbp}{\widetilde{\bp}}
\newcommand{\tbq}{\widetilde{\bq}}
\newcommand{\bz}{{\bf z}}
\newcommand{\tbz}{\widetilde{\bz}}
\newcommand{\tbph}{\widetilde{\bp_h}}
\newcommand{\tbpq}{\widetilde{\bq_h}}
\newcommand{\hdivgam}{H({\rm div}_\gamma; \gamma)}
\newcommand{\hdivGam}{H({\rm div}_\Gamma; \Gamma)}

\definecolor{cornellred}{rgb}{0.7, 0.11, 0.11}

\DeclareFontFamily{U}{matha}{\hyphenchar\font45}
\DeclareFontShape{U}{matha}{m}{n}{
      <5> <6> <7> <8> <9> <10> gen * matha
      <10.95> matha10 <12> <14.4> <17.28> <20.74> <24.88> matha12
      }{}
\DeclareSymbolFont{matha}{U}{matha}{m}{n}
\DeclareFontSubstitution{U}{matha}{m}{n}

\DeclareFontFamily{U}{mathx}{\hyphenchar\font45}
\DeclareFontShape{U}{mathx}{m}{n}{
      <5> <6> <7> <8> <9> <10>
      <10.95> <12> <14.4> <17.28> <20.74> <24.88>
      mathx10
      }{}
\DeclareSymbolFont{mathx}{U}{mathx}{m}{n}
\DeclareFontSubstitution{U}{mathx}{m}{n}

\DeclareMathDelimiter{\vvvert}{0}{matha}{"7E}{mathx}{"17}

\begin{document}

	\articletype{Research Article}
	\received{Month	DD, YYYY}
	\revised{Month	DD, YYYY}
  \accepted{Month	DD, YYYY}
  \journalname{De~Gruyter~Journal}
  \journalyear{YYYY}
  \journalvolume{XX}
  \journalissue{X}
  \startpage{1}
  \aop
  \DOI{10.1515/sample-YYYY-XXXX}

\title{A mixed quasi-trace surface finite element method for the Laplace-Beltrami problem}
\runningtitle{Mixed quasi-trace FEM}

\author*[1]{Alan Demlow}
\runningauthor{A. Demlow} 
\affil[1]{\protect\raggedright 
Department of Mathematics, Texas A\&M University, College Station, TX, USA, e-mail: demlow@math.tamu.edu}
	
	
\abstract{Trace finite element methods have become a popular option for solving surface partial differential equations, especially in problems where surface and bulk effects are coupled.  In such methods a surface mesh is formed by approximately intersecting the continuous surface on which the PDE is posed with a three-dimensional (bulk) tetrahedral mesh.  In classical $H^1$-conforming trace methods, the surface finite element space is obtained by restricting a bulk finite element space to the surface mesh. It is not clear how to carry out a similar procedure in order to obtain other important types of finite element spaces such as $H({\rm div})$-conforming spaces. Following previous work of Olshanskii, Reusken, and Xu on $H^1$-conforming methods, we develop a ``quasi-trace'' mixed method for the Laplace-Beltrami problem.  The finite element mesh is taken to be the intersection of the surface with a regular tetrahedral bulk mesh as previously described, resulting in a surface triangulation that is highly unstructured and anisotropic but satisfies a classical maximum angle condition.  The mixed method is then employed on this mesh.  Optimal error estimates with respect to the bulk mesh size are proved along with superconvergent estimates for the projection of the scalar error and a postprocessed scalar approximation.  
}

\keywords{surface partial differential equations; Laplace-Beltrami problem; mixed finite element method; trace finite element method}


\maketitle







\section{Introduction}

Consider the Laplace-Beltrami problem
\begin{align}
\label{lb_problem}
-\Delta_\gamma u=f \hbox{ on } \gamma,
\end{align}
where $\gamma$ is a sufficiently smooth, closed, and compact two-dimensional surface embedded in $\mathbb{R}^3$ and $f$ satisfies the compatibility condition $\int_\gamma f =0$.  Also, $\Delta_\gamma= \nabla_\gamma \cdot \nabla_\gamma$ is the Laplace-Beltrami operator, where $\nabla_\gamma$ is the tangential gradient operator.  

Over the past decades a variety of surface finite element methods have been developed to approximately solve \eqref{lb_problem}.   Some of these methods are more or less direct translations of FEM for Euclidean problems to the surface context, while others are specific to the context of surfaces. In order to construct a surface FEM that is an analogue of a Euclidean FEM, the usual methodology is to first construct a polyhedral surface $\Gamma$ approximating $\gamma$, with $\Gamma$ typically having shape-regular triangular faces.  Standard finite element spaces such as Lagrange \cite{Dz88}, mixed \cite{HS12,FFF16} and standard  \cite{DMS13, ADMSSV15, DM16} and hybridizable \cite{CD16} discontinuous Galerkin elements may be lifted elementwise to $\Gamma$, and implementation is then quite similar to the Euclidean context.  Higher-order polynomial approximations of $\gamma$ may also be used \cite{De09}.

{\it Trace} or {\it cut} surface finite element methods are one of the two main classes of finite element methods specifically designed for approximate solution of surface PDE.  The other main class, {\it narrow band methods}, seems less popular in practice, and we do not discuss it further here as it is not directly relevant to our presentation below; cf. \cite{BDN20} for an overview of parametric, trace, and narrow-band methods.   Trace methods are particularly useful in the context of physical phenomena modeled by systems of partial differential equations posed on domains and surfaces of different dimensions.  Examples of such systems include two-phase flow, which might typically involve a Stokes system posed on a three-dimensional domain along with a surface tension effect modeled by an elliptic surface PDE on the interface between the two phases.  Our goal in this paper is to develop a type of mixed trace surface FEM using standard Raviart-Thomas or Brezzi-Douglas-Marini elements.  

In trace surface FEM an outer (bulk) three-dimensional domain is first triangulated with a tetrahedral mesh $\T_h$.  A surface mesh $\F_h^*$ is  constructed by approximately intersecting $\T_h$ with the surface $\gamma$.  A more precise construction will be discussed as necessary below.  The elements of $\F_h^*$ are {\it arbitrary intersections} of planes with shape-regular tetrahedron.  These elements may thus be either triangles or quadrilaterals, may be of any diameter not exceeding that of the bulk element from which they are derived, and need not satisfy a standard nondegeneracy (shape-regularity or minimum-angle) condition even if the bulk mesh $\T_h$ does.  A trace finite element method is then constructed by first defining a finite element space $B_h$ on the bulk mesh $\T_h$.  In practice this is often a standard Lagrange space.  The surface finite element space $S_h$ is then obtained by simply restricting $B_h$ to the surface mesh $\F_h^*$.  Abstractly the finite element method is then constructed in the usual way using $S_h$.  However, there is not a natural practical basis for $S_h$ since this space is defined in practice using degrees of freedom for the {\it bulk} space $B_h$.  The result is a finite element system that is solved over a spanning set for $S_h$ rather than a basis.  In particular this trace construction artificially introduces degenerate modes into the system of linear equations.  For standard Lagrange trace methods these have proved to not be a major issue, as they can be handled without major difficulty at the linear algebra level \cite{ORG09, OR10} or by stabilization \cite{BHL15, BHLMZ16}.  

Trace finite element methods have been developed for a range of problems including the Laplace-Beltrami problem and more complex scalar elliptic problems  and surface vector Laplace and Stokes problems; cf. \cite{ORG09, BHLMZ20, BDN20, GJOR18, JORZPP, OQRY18} among many others.  In most of these situations the bulk space is relatively simple, and is often related to standard Lagrange spaces or discontinuous analogs.  The reason is that the procedure of restricting a finite element space to a surface can involve a number of subtle and complex issues involving traces and extensions on function  spaces, the interaction between bulk and surface degrees of freedom, and the sometimes delicate considerations needed to ensure that stability holds in the form of a coercivity or inf-sup condition.  These issues are well understood in the context of $H^1$ spaces but are largely unexplored in the context of more complex function spaces such as $H({\rm div})$ and $H({\rm curl})$. A mixed method for the Darcy problems using Masud-Hughes stabilization was given in \cite{HL17, HLM17}, but this method does not use standard mixed finite element spaces or trace analogues of such spaces.  Thus for example tangentiality is enforced only weakly.    As noted in these works,  trace analogues to standard mixed finite element spaces such as Raviart-Thomas and BDM spaces have not been developed, and the extent to which this is possible is not clear.   On the other hand, $H({\rm div})$-conforming spaces have attractive properties such as strongly preserving tangentiality of the vector variable and strong preservation of divergence constraints; cf. \cite{BDL20, LLS20}. 

An essential feature of trace finite element methods is that both the surface $\gamma$ and the bulk surrounding it are triangulated using the {\it same mesh}.  This makes it much simpler in practice to relate physical information from the surface and the bulk.  In contrast, if $\gamma$ and the bulk are meshed separately it is more difficult in practice to pass information between the surface and the bulk.  These issues are compounded in problems where $\gamma$ evolves in time.  Surface triangulations tend to degenerate as $\gamma$ evolves even if the initial mesh is regular, which requires remeshing or other strategies.  In addition, the overhead cost necessary to pass information between the bulk and surface meshes must be born at each time step since the relationship between the two changes constantly.  In trace methods, all of these mesh-related interactions are handled rather naturally even in the context of evolving surfaces.   These desirable features of trace methods are {\it not} dependent on using a surface finite element space derived from a bulk space as described above, only on using a trace  mesh derived from a bulk mesh.  

In \cite{ORX12} it was observed that while the trace surface mesh $\F_h^*$ is in many ways degenerate, when $\T_h$ is shape regular the triangular elements of $\F_h^*$ satisfy a classical maximum-angle condition in that all interior triangle elements are uniformly bounded below $\pi$.  In addition all quadrilateral elements of $\F_h^*$ may be bisected into triangles satisfying the maximum angle condition.  Thus one may easily derive from $\F_h^*$ a conforming triangular mesh $\F_h$ that satisfies a maximum angle condition.  The latter condition is critical because it is known in many contexts to be sufficient to guarantee good approximation properties of the finite element spaces and their natural interpolation operators.  In \cite{ORX12} the authors constructed a surface Lagrange finite element method on the surface mesh $\F_h$.  This ``quasi-trace'' method is in mostly identical to the original surface finite element method of Dziuk in \cite{Dz88}, except that the underlying mesh is highly anisotropic and may possess sharp local variations in element sizes instead of being shape-regular.   Optimal-order error estimates may be derived in a straightforward manner using well-known approximation properties of Lagrange spaces on anisotropic meshes \cite{Apel99}.   A disadvantage of this method noted in \cite{ORX12} is that the condition number of the resulting system matrix may be arbitrarily large and the construction of effective preconditioning techniques for iterative solvers is not immediately clear.   This is due to the highly unstructured anisotropic nature of the mesh.  


In this paper we describe and prove error estimates for mixed quasi-trace FEM for the Laplace-Beltrami problem \eqref{lb_problem}.  As noted above, trace methods for \eqref{lb_problem} based on standard conforming Lagrange FEM for are well-developed and highly effective, and there appears to be little motivation for using the quasi-trace method developed in \cite{ORX12}.  If however a mixed trace method is desired options are much more limited, and a quasi-trace method is a simple and potentially attractive solution.  Standard mixed finite element methods yield discrete approximations $u_h$ and $\bp_h$ to the scalar solution (e.g., Darcy pressure) $u$ and the solution gradient $\bp=-\nabla_\gamma u$ (e.g., the Darcy flux) of the Laplace-Beltrami problem.  Our methods below are based on well-known Raviart-Thomas (RT) and Brezzi-Douglas-Marini (BDM) finite element spaces, in particular the two lowest-order Raviart-Thomas spaces and the lowest-order BDM space.  Many of our considerations apply to higher-order mixed spaces as well, but as is well known the approximation of $\gamma$ by the polyhedral approximation $\Gamma$ in our finite element method induces a ``geometric error'' of order $h^2$ in the finite element solution that limits the order of convergence independent of the degree of the finite element space.  

Our error analysis below includes optimal-order estimates for $\bp-\bp_h$ and $u-u_h$ along with a ``superconvergent'' estimate for $\pi_h u-u_h$, where $\pi_h$ is the $L_2(\Gamma)$ projection onto the scalar portion of the finite element space.  In addition we prove error estimates for superconvergent postprocessing techniques that yield a higher-order approximation to the scalar variable $u$.  Such error estimates are all classical in the context of Euclidean mixed methods.  However, even for quasi-uniform surface meshes our estimates do not all appear directly in the literature.  Error estimates for general surface mixed methods appear in the context of finite element exterior calculus in \cite{HS12}.   The derivation and statement of the estimates in that work applies to the methods we consider, but does not include the superconvergence estimates that we do and is not as readily accessible to audiences attuned to classical mixed finite methods.  In \cite{CD16} the authors give a general error analysis for a class of hybridizable discontinuous Galerkin methods that includes the mixed methods considered here.  $L_2$ estimates for the scalar and vector variables are proved along with superconvergence estimates for $\pi_h u-u_h$.  Postprocessing techniques are discussed, but no error estimates are proved.  Finally, \cite{FFF16} contains error analysis of the lowest-order Raviart-Thomas method on polyhedral surfaces in the natural variational $H({\rm div}) \times L_2$ norm.  Such estimates are however insufficient for the $BDM$ elements we consider here due to the difference in approximation order observed in the $L_2$ and $H({\rm div})$ norms for the vector variable.  In summary, our postprocessing error estimates which take geometric errors into account have not previously appeared in the literature even in the context of shape regular meshes.  Our other error estimates largely appear in the literature, but often within a more general proof framework that is not as transparently equivalent to that classically used for mixed methods.  In contrast to previous works we also assume anisotropic rather than shape regular meshes.  For many of our estimates the transition from shape regular to meshes satisfying the maximum angle condition is accomplished by relatively direct application of existing interpolation results on anisotropic meshes, while slightly more care is required to prove error estimates for some postprocessing techniques.  
 
The organization of the paper is as follows.  In Section \ref{sec2} we give preliminaries concerning surface geometry and the mesh.  In Section \ref{sec3} we describe our finite element spaces and method.  The error analysis is contained in Section \ref{sec4}, and in Section \ref{sec5} we give some basic numerical results.

\section{Surface and mesh description}
\label{sec2}
\subsection{Surface description}
Let $\gamma \subset \mathbb{R}^3$ be a two-dimensional closed, compact surface embedded in $\mathbb{R}^3$.  We assume that $\gamma$ is at least $C^2$ and that it is furthermore sufficiently smooth as needed below; surface regularity requirements are not a major focus of this work and we do not specify them precisely.  
Under these assumptions, there exists a signed distance function $d:U \rightarrow \mathbb{R}$, where $U \subset\mathbb{R}^3$ is a tubular neighborhood about $\gamma$ whose width is inversely proportional to the maximum principal curvature on $\gamma$.  The function $d$ possesses the same smoothness as $\gamma$, and is negative inside of $\gamma$ and positive outside.  In addition, $\nu(x)=\nabla d(x)$ is the outward pointing unit normal to $\gamma$ for $x \in \gamma$, and we also define the Weingarten map $\HH=D^2 d$. For $x \in U$, there is one zero eigenvalue of $\HH$ corresponding to the eigenvector $\nu$, while the other two eigenvalues $\kappa_1, \kappa_2$ are the principal curvatures of $\gamma$ for $x \in \Gamma$ and are the principal curvatures of parallel surfaces for $x \in U$.  Finally, the closest point projection 
\begin{align}
\bP_d(x)=x-d(x) \nu(x)
\end{align}
is defined on $U$ and (as the name implies) carries $x \in U$ to the unique closest point on $\gamma$.  Note as well that $\nu(x)=\nu(\bP_d(x))$ for $x \in U$.

\subsection{Mesh description}
\label{sec:mesh}
We next discuss trace surface meshes.  Because we will not explicitly use the outer (bulk) mesh in our analysis, we shall discuss the relationship between the bulk and surface mesh somewhat informally, and then develop our method and error estimates under somewhat more abstract assumptions on the surface mesh.  These assumptions have been shown elsewhere to hold for trace meshes.

A typical construction of a surface trace mesh $\F_h^*$ proceeds as follows.   Assume that $\gamma$ is represented by a level set function in that $\gamma = \{x \in \mathbb{R}^3: \psi(x)=0\}$, with $\psi:\mathbb{R}^3 \rightarrow \mathbb{R}$ sufficiently smooth.  We may take $\psi=d$, but this is not necessary.  Let $\Omega \subset \mathbb{R}^3$ be a polyhedral bulk domain containing a sufficiently large neighborhood of $\gamma$, and let $\T_h$ be a quasi-uniform tetrahedral decomposition of $\Omega$.   Finally, let $I_L \psi$ be the $\mathbb{P}_1$ Lagrange interpolant of $\psi$ on $\T_h$ and define the approximate surface $\Gamma=\{x \in \Omega: I_L \psi(x)=0\}$.  The faces of $\Gamma$ are then arbitrary intersections of planes with tetrahedrons and may be triangular or quadrilaterals; we denote by $\F_h^*$ the set of such faces.  As noted in the introduction and established in \cite{ORX12}, the quadrilateral faces of $\F_h^*$ may be subdivided into triangles in order to yield a conforming triangulation $\F_h$ whose elements all satisfy a maximum angle condition.

Our assumptions on the surface mesh $\F_h$ are as follows.  We note that these can all be shown to be satisfied under reasonable assumptions by meshes constructed as above; cf. \cite{BDN20} and \cite{ORX12}
\begin{enumerate}
\item The elements of $\F_h$ are triangles satisfying a {\it maximum angle condition}.  That is, the maximum interior angle of triangles lying in $\F_h$ is bounded uniformly below $\pi$, independent of $h$.
\item The mesh $\F_h$ is a decomposition of a closed Lipschitz surface $\Gamma\subset U$ having triangular faces.
\item The maximum diameter of elements in $\F_h$ is given by $h$.  In practice $h$ would generally be the maximum diameter of elements lying in a quasi-uniform bulk mesh from which the surface mesh $\F_h$ is derived.
\item The discrete surface $\Gamma$ is transverse to the continuous surface $\gamma$ in the sense that the (piecewise constant) unit outer normal $\nu_h$ to $\Gamma$ satisfies $\nu_h(x) \cdot \nu(x) \ge c_\nu >0$.
\item The discrete surface surface $\Gamma$ satisfies
\begin{align}
\label{d_assump}
\|d\|_{L_\infty(\Gamma)} + h\|\nu-\nu_h\|_{L_\infty(\gamma)} \lesssim h^2.
\end{align}
\end{enumerate}

\subsection{Differential operators, function spaces, and the mixed Laplace-Beltrami problem}  Given a scalar function $u:\gamma \rightarrow \mathbb{R}$, the tangential gradient can be defined by first extending $u$ to $U$ by $u(x)=u(\bP_d(x))$, $x \in U$.  The tangential gradient is given by $\nabla_\gamma u(x)=  \Pi_\gamma \nabla u(x)$, where $\nabla$ is the Euclidean gradient and $\Pi_\gamma = {\bf I}-\nu \otimes \nu$ is the projection onto the tangent plane of $\gamma$.  In addition we define ${\rm div}_\gamma=\nabla_\gamma \cdot$ and $\Delta_\gamma = \nabla_\gamma \cdot \nabla$.  These definitions are intrinsic, that is, they only depend on the value of $u$ on $\gamma$ even though we use and extension of $u$ to $\mathbb{R}^3$.  Similar definitions for $\nabla_\Gamma$ and ${\rm div}_\Gamma$ hold.  Other tangential derivatives of functions on $\gamma$ may be taken by iteratively applying the above-defined operators.

We next define the function spaces $H^1(\gamma)=\{u \in L^2(\gamma): \nabla_\gamma u \in [L_2(\gamma)]^3\}$ and $H({\rm div}_\gamma, \gamma) = \{\vec{u} \in [L_2(\gamma)]^3 : \vec{u} \cdot \nu= 0 \hbox{ and } {\rm div}_\gamma \vec{u} \in L_2(\gamma)\}$.  Note that $H({\rm div}_\gamma; \gamma)$ consists of {\it tangential} $L_2$ vector fields with $L_2$ tangential divergence.   Other function spaces such as $H^2(\gamma)$ may be similarly defined.  Finally, we let $L_2^0(\gamma)= \{f \in L_2(\gamma): \int_\gamma f = 0\}$.  

We now formulate the standard mixed weak form of the Laplace-Beltrami problem.  Let first
\begin{align}
\label{gamma_forms}
\begin{aligned}
a(\bp, \bq)&=\int_\gamma \bp \cdot \bq, ~\bp, \bq \in [L_2(\gamma)]^3,
\\ b(\bp, u) & = \int_\gamma u {\rm div}_\gamma \bp, ~u \in L_2(\gamma) \hbox{ and } \bp \in H({\rm div}_\gamma, \gamma),
\\ (u,v)&= \int_\gamma uv, ~~u,v \in L_2(\gamma).
\end{aligned}
\end{align}
Given $f \in L_2^0(\gamma)$, we seek $(\bp, u ) \in H({\rm div}_\gamma; \gamma) \times L_2^0(\gamma)$ such that
\begin{align}
\label{mixed_prob}
\begin{aligned}
a(\bp, \bq) - b(\bq, u)& =0, ~~\bq \in H({\rm div}_\gamma; \gamma), 
\\ b(\bp, v) & = (f, v), ~~v \in L_2(\gamma).
\end{aligned}
\end{align}
This problem is equivalent to the standard weak form of the Laplace-Beltrami problem \eqref{lb_problem} under the given assumptions.  

\subsection{Lifts, extensions, and Piola transforms}
In this section we discuss relationships between functions defined on the continuous surface $\gamma$ and those defined on the discrete surface $\Gamma$. Given a scalar function $u:\gamma \rightarrow \mathbb{R}$, its extension to $u$ is given by $u^\ell(x) = u(\bP_d(x))$; the restriction of $u^\ell$ to $\Gamma$ then defines an appropriate function on $\Gamma$.  Given $u_h$ defined on $\Gamma$ and $x \in \Gamma$, let $u_h^\ell(\bP_d(x))=u_h(x)$.  We may extend $u_h^\ell$ to all of $U$ by taking $u_h^\ell$ to be constant in the direction normal to $\gamma$.  

Next let $\mu$ satisfy
\begin{align}
\label{mu_def}
\int_\Gamma f(x) \mu = \int_\gamma f^\ell(x), \hbox{ all } f \in L_1(\Gamma).
\end{align}
We then have that (cf. \cite{DD07})
\begin{align}
\mu(x)=\nu \cdot \nu_h (1-d(x) \kappa_1(x)) (1-d(x) \kappa_2(x)),
\end{align}
and
\begin{align}
\label{mu_bound}
|1-\mu| \lesssim h^2, ~~\mu \simeq 1.
\end{align}

We next define the Piola transformation of $H({\rm div})$ tangential vector functions between the surfaces $\gamma$ and $\Gamma$ (cf. \cite{CD16}).  Given $\bp_h \in H({\rm div}_\Gamma; \Gamma)$, let 
\begin{align}
\label{piola_def}
(\piola_{\bP_d} \bp_h)(\bP_d(x)) = \frac{1}{\mu} [\Pi-d \HH] \bp_h(x), ~~x \in \Gamma.
\end{align}
Given $\bp \in H({\rm div}_\gamma; \gamma)$, we also define
\begin{align}
\label{inv_piola_def}
\piola_{\bP_d^{-1}}(x) = \mu \left [{\bf I}-\frac{\nu \otimes \nu_h}{\nu \cdot \nu_h} \right ] [ {\bf I}-d \HH]^{-1} \bp(\bP_d(x)), ~~x \in \Gamma.
\end{align}
An elementary computation yields $\piola_{\bP_d} \piola_{\bP_d^{-1}} \bp = \bp$.  In addition, $\bp \in H({\rm div}_\gamma; \gamma)$ if and only if $\piola_{\bP_d^{-1}} \bp \in H({\rm div}_{\Gamma}; \Gamma)$.   Finally, we have that 
\begin{align}
\label{piola_div}
{\rm div}_{\Gamma} \bp_h = \mu {\rm div}_\gamma \piola_{\bP_d} \bp_h, ~~~ {\rm div}_\gamma \bp = \frac{1}{\mu} {\rm div}_\Gamma \piola_{\bP_d^{-1}} \bp
\end{align}
for $\bp \in H({\rm div}_\gamma; \gamma)$ and $\bp_h \in H({\rm div}_\Gamma; \Gamma)$.  Note that from \cite{CD16} we have for $v \in H^1(\gamma)$ and $\bq=-\nabla_\gamma v$ that
\begin{align}
\label{piola_dif}
|\nabla_\Gamma v^\ell + \piola_{\bP_d^{-1}}\bq| \lesssim h^2 |q^\ell|.
\end{align}
The proof of  this inequality and also \eqref{mu_bound} above rely only on the assumption \eqref{d_assump} and not on shape regularity.

\subsection{Equivalence of norms and forms}
The material in this section is largely found in \cite{DD07, CD16}.  We first define the forms $a_\Gamma$, $b_\Gamma$, and $(\cdot, \cdot)_\Gamma$ to be the obvious counterparts on $\Gamma$ of the forms defined on $\gamma$ in \eqref{gamma_forms}:
\begin{align}
\label{Gamma_forms}
\begin{aligned}
a_\Gamma(\bp, \bq)&=\int_\Gamma \bp \cdot \bq, ~\bp, \bq \in [L_2(\Gamma)]^3,
\\ b_\Gamma (\bp, u) & = \int_\Gamma u {\rm div}_\Gamma \bp, ~u \in L_2(\Gamma) \hbox{ and } \bp \in H({\rm div}_\Gamma, \Gamma),
\\ (u,v)_\Gamma&= \int_\Gamma uv, ~~u,v \in L_2(\Gamma).
\end{aligned}
\end{align}

Below and in what follows we shall use a superscript ``tilde'' to denote a Piola transform.  That is, given $\bp \in H({\rm div}_\gamma; \gamma)$ we let $\tbp= \piola_{\bP_d^{-1}} \bp$, and given $\bp_h \in H({\rm div}_\Gamma; \Gamma)$ we let $\tbph = \piola_{\bP_d} \bp_h$.  From \eqref{piola_div} we then have
\begin{align}
\label{b_equiv}
b(\bp, u)=b_\Gamma(\tbp, u^\ell), ~~\bp \in H({\rm div}_\gamma; \gamma) \hbox{ and } u \in L_2(\gamma).
\end{align}
This identity also holds over any subset $T$ of $\Gamma$ and its image $\bP_d(T) \subset \gamma$.

In contrast, the quantities $a(\bp, \bq)$ and $a_\Gamma (\tbp, \tbq)$ are not equal and will be a main source of ``geometric errors'' in the discretization of \eqref{mixed_prob} over the discrete surface $\Gamma$.  We quantify this difference as follows.  Similar results are found in \cite{FFF16}, but using somewhat different notation, so we provide a brief proof.
\begin{proposition}
Given $\bp, \bq \in \hdivgam$, there holds
\begin{align}
\label{a_equiv} 
a(\bp, \bq) - a_\Gamma(\tbp,\tbq) = a([\Pi-\B_h] \bp, \bq),
\end{align}
where $\B_h = \mu \Pi[\I-d\HH]^{-1} \left [ \I - \frac{\nu_h \otimes \nu}{\nu \cdot \nu_h} \right ] \left [ \I - \frac{\nu \otimes \nu_h}{\nu \cdot \nu_h} \right ] [\I -d\HH]^{-1} \Pi$.
In addition, $|\Pi-\B_h| \lesssim h^2$, so that
\begin{align}
\label{a_pert_bound}
|a(\bp,\bq)-a_\Gamma(\tbp, \tbq)| \lesssim h^2 \|\bp\|_\gamma \|\bq\|_\gamma.
\end{align}
\end{proposition}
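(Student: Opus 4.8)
The plan is to compute $a_\Gamma(\tbp,\tbq)$ explicitly as an integral over $\gamma$ and to compare it term by term with $a(\bp,\bq)$. First I would set $M := \I - \frac{\nu\otimes\nu_h}{\nu\cdot\nu_h}$, so that the inverse Piola transform \eqref{inv_piola_def} reads $\tbp(x) = \mu\, M\,[\I-d\HH]^{-1}\bp(\bP_d(x))$ and likewise for $\tbq$. Using that $\HH$ is symmetric (hence $[\I-d\HH]^{-1}$ is symmetric) and that $M^T = \I - \frac{\nu_h\otimes\nu}{\nu\cdot\nu_h}$, the pointwise product is
\begin{align*}
\tbp\cdot\tbq = \mu^2\, \bp^T\, [\I-d\HH]^{-1}\, M^T M\, [\I-d\HH]^{-1}\, \bq .
\end{align*}
Next I would convert $\int_\Gamma \tbp\cdot\tbq$ into an integral over $\gamma$ via the area relation \eqref{mu_def}: taking $f=(\tbp\cdot\tbq)/\mu$ there removes exactly one factor of $\mu$ and gives $a_\Gamma(\tbp,\tbq) = \int_\gamma \mu\,\bp^T[\I-d\HH]^{-1}M^T M[\I-d\HH]^{-1}\bq$. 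Finally, since $\bp,\bq$ are tangential we have $\bp=\Pi\bp$ and $\bq=\Pi\bq$, which lets me insert a factor of $\Pi$ on each side so that the integrand becomes exactly $\bp^T\B_h\bq$ with $\B_h$ as in the statement. Subtracting this from $a(\bp,\bq)=\int_\gamma\bp^T\Pi\bq$ (again using tangentiality) and invoking the symmetry of $\Pi-\B_h$ yields the identity \eqref{a_equiv}.

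For the bound $|\Pi-\B_h|\lesssim h^2$ I would argue in two stages. By \eqref{d_assump} we have $\|d\|_{L_\infty(\Gamma)}\lesssim h^2$ while $\HH$ is bounded, so $[\I-d\HH]^{-1}=\I+O(h^2)$; combining this with $\mu=1+O(h^2)$ from \eqref{mu_bound} and the boundedness of $\Pi$ and of $M$ (the latter because $\nu\cdot\nu_h\ge c_\nu>0$ and $\nu,\nu_h$ are unit vectors), a routine expansion gives $|\B_h-\Pi M^T M\Pi|\lesssim h^2$. It then suffices to estimate $\Pi M^T M\Pi-\Pi$.

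The crux is the structural cancellation coming from $\Pi\nu=0$. Expanding $M^T M = \I - \frac{\nu\otimes\nu_h+\nu_h\otimes\nu}{\nu\cdot\nu_h} + \frac{\nu_h\otimes\nu_h}{(\nu\cdot\nu_h)^2}$ and using $\Pi(a\otimes b)\Pi=(\Pi a)\otimes(\Pi b)$ together with $\Pi\nu=0$, both cross terms vanish identically and I am left with $\Pi M^T M\Pi = \Pi + \frac{(\Pi\nu_h)\otimes(\Pi\nu_h)}{(\nu\cdot\nu_h)^2}$. Since $|\Pi\nu_h|^2 = 1-(\nu\cdot\nu_h)^2$ and $\nu\cdot\nu_h = 1-\tfrac12|\nu-\nu_h|^2$ with $\|\nu-\nu_h\|_{L_\infty(\gamma)}\lesssim h$ by \eqref{d_assump}, this remaining rank-one term has size $\lesssim h^2$. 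Combining the two stages gives $|\Pi-\B_h|\lesssim h^2$, and \eqref{a_pert_bound} then follows from Cauchy--Schwarz. I expect the main difficulty to be bookkeeping rather than conceptual: one must track the powers of $\mu$ and the symmetric matrix factors carefully through the change of variables to land on precisely the stated $\B_h$. The one genuinely essential observation is that $\Pi\nu=0$ eliminates the terms that would otherwise be $O(1)$ and $O(h)$, leaving only the quadratic-in-$(\nu-\nu_h)$ remainder that produces the sharp $h^2$ rate.
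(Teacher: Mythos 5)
Your proof is correct and takes essentially the same route as the paper: the identity \eqref{a_equiv} comes from the definition of $\piola_{\bP_d^{-1}}$, the area relation \eqref{mu_def}, and tangentiality $\Pi\bp=\bp$, and the bound $|\Pi-\B_h|\lesssim h^2$ comes from treating $\mu$ and $[\I-d\HH]^{-1}$ as $O(h^2)$ perturbations of the identity together with the cancellation $\Pi\nu=0$. The only cosmetic difference is that you compute $\Pi M^T M\Pi$ exactly, whereas the paper compares $\B_h$ with $\Pi\Pi_\Gamma\Pi$; both arguments reduce to the same rank-one remainder $(\Pi\nu_h)\otimes(\Pi\nu_h)$, whose size $1-(\nu\cdot\nu_h)^2\lesssim h^2$ is exactly the paper's estimate $|(\nu_h-\nu\cdot\nu_h\,\nu)\otimes(\nu_h-\nu\cdot\nu_h\,\nu)|\lesssim h^2$.
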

\begin{proof}
The relation \eqref{a_equiv} follows from the definition \eqref{inv_piola_def} and $\Pi \bp = \bp$.  In addition,  $\Pi [\I-d\HH]^{-1} = [\I-d\HH]^{-1}$,  and $|1-\nu\cdot \nu_h| =\frac{1}{2} |(\nu-\nu_h) \cdot (\nu-\nu_h)| \lesssim h^2$ by \eqref{d_assump}.  Also, it is not hard to check that $|\I - [\I-d\HH]^{-1}| \lesssim h^2$ for $h$ small enough.  Employing also \eqref{mu_bound}, further rearranging terms, and recalling that $|\Pi \Pi_\Gamma\Pi -\Pi| = |(\nu_h- \nu \cdot \nu_h \nu) \otimes (\nu_h - \nu \cdot \nu_h \nu)| \lesssim h^2$ then yield that $\Pi-\B_h = \Pi-\Pi \Pi_\Gamma\Pi + O(h^2)$ and so
\begin{align}
\label{B_bound}
|\Pi -\B_h| \lesssim h^2 + |\Pi-\Pi \Pi_\Gamma \Pi| \lesssim h^2.
\end{align}
\end{proof}
We shall also need the following norm equivalences.
\begin{align}
\label{norm_equiv}
\begin{aligned}
\|u^\ell\|_\Gamma & \simeq \|u\|_\gamma, ~~u \in L_2(\Gamma),
\\ \|\tbp\|_\Gamma & \simeq \|\bp\|_\gamma, ~~\bp \in \hdivgam,
\\ \|\tbp\|_{[H_h^1(\Gamma)]^3} & \simeq \|\bp\|_{H^1(\gamma)}^3, ~~ \tbp \in \hdivgam \cap [H^1(\gamma)]^3.
\\ \|u^\ell\|_{H_h^2(\Gamma)} &  \lesssim \|u\|_{H^2(\gamma)}, ~u \in H^2(\gamma).
\end{aligned}
\end{align}
Here $H_h^1$ denotes a broken (elementwise) $H^1$ norm and similarly for $H_h^2$.  Note also that the next-to-last inequality is proved in \cite{BDL20}.

\section{Finite element spaces and finite element method}
\label{sec3}
\subsection{Raviart-Thomas and Brezzi-Douglas-Marini spaces}

We shall use the classical Brezzi-Douglas-Marini and Raviart-Thomas spaces for approximating $H({\rm div}) \times L_2$.  Let $\hat{T} \subset \mathbb{R}^2$ be the unit reference triangle.  In order to define $BDM$ spaces we take $\bQh (\hat{T}) = [\mathbb{P}_k]^2$ and $V_h(\hat{T})= \mathbb{P}_{k-1}(\hat{T})$ for $k \ge 1$.  We then set $BDM_k(\hat{T}) = \bQh (\hat{T}) \times V_h(\hat{T})$.   For the Raviart-Thomas family of spaces we take $\bQh(\hat{T}) = [\mathbb{P}_k]^2 \oplus {\bf x} \mathbb{P}_k$ and $V_h (\hat{T})= \mathbb{P}_k(\hat{T})$ for $k \ge 0$, and set $RT_k = \bQh(\hat{T}) \times V_h(\hat{T})$.  We shall primarily consider the cases $k \le 1$, since the geometric errors due to approximation of $\gamma$ by $\Gamma$ are of order $h^2$ independent of $k$ and thus element spaces of higher degree are not useful.

In order to define corresponding spaces on $\Gamma$, we fix $T \in \F_h$ and let $A: \hat{T} \rightarrow T$ be one of the possible natural affine transformations mapping $\hat{T}$ to $T$.  We shall denote by ${\bf A}$ the $3 \times 2$ matrix that is the derivative of this affine map.  Let also $|\A|=\sqrt{\A^\top \A}$ be the Jacobian determinant.  Then we let
\begin{align}
\bQh= \{\bp_h \in H({\rm div}_{\Gamma}; \Gamma): \forall ~T \in \F_h, \bp_h|_{T} = \frac{1}{|\A|} \A \hat{\bp}_T \circ A^{-1}, \hbox{ some } \hat{\bp}_T \in \bQh(\hat{T}) \}.
\end{align}
Members of ${\bf V}_h$ are elementwise Piola transforms of finite element vector functions defined on the reference element, with the additional restriction that conormal components must be continuous across element boundaries.  Note that because adjacent faces of $\Gamma$ are not necessarily coplanar, the conomals coming from two triangles $T_1, T_2 \in \F_h$ sharing a given edge may not be colinear.  We also define
\begin{align}
\begin{aligned}
V_h & = \{v_h \in L_2(\Gamma): \hbox{ for all } T \in \F_h, v_h|_T = \hat{v}_T \circ A^{-1}, \hbox{ some } \hat{v}_T \in V_h(\hat{T})\},
\\ V_h^0 & = V_h \cap L_2^0(\Gamma).
\end{aligned}
\end{align}
Finally we set $RT_k$ or $BDM_k$ equal to ${\bf Q}_h \times V_h$, depending on the choice of the corresponding reference spaces.

We shall also need interpolation error estimates for the above spaces on anisotropic meshes.   Stability and error estimates for canonical interpolation operators on anisotropic meshes can be found in \cite{AK20} for $BDM$ elements and in \cite{AADL11,AD99,DL08} for $RT$ elements.  The estimates given in these papers are finer than what we state and require below as they encode directional information about the triangles instead of merely using the element diameter in the error estimate.  In trace methods the anisotropy of the mesh is an artifact of the mesh generation technique and is not related to anisotropic behavior of the PDE solution.  In addition, the anisotropy of our meshes is highly unstructured, with elements sharing a single vertex potentially having vastly different aspect ratios, diameters, and orientations, as illustrated in Figure \ref{fig2}.  Finally, it is shown in \cite{DO12} that in every trace element patch there is a surface element that is shape regular and has diameter equivalent to the bulk element from which it is derived.  Thus error estimates that sharply take into account elementwise size and directional information will not generally capture meaningful information.  We instead derive error estimates with respect to the maximum mesh diameter $h$.

\setlength{\unitlength}{.75cm}
\begin{figure}[h]
\centering
\includegraphics[scale=1]{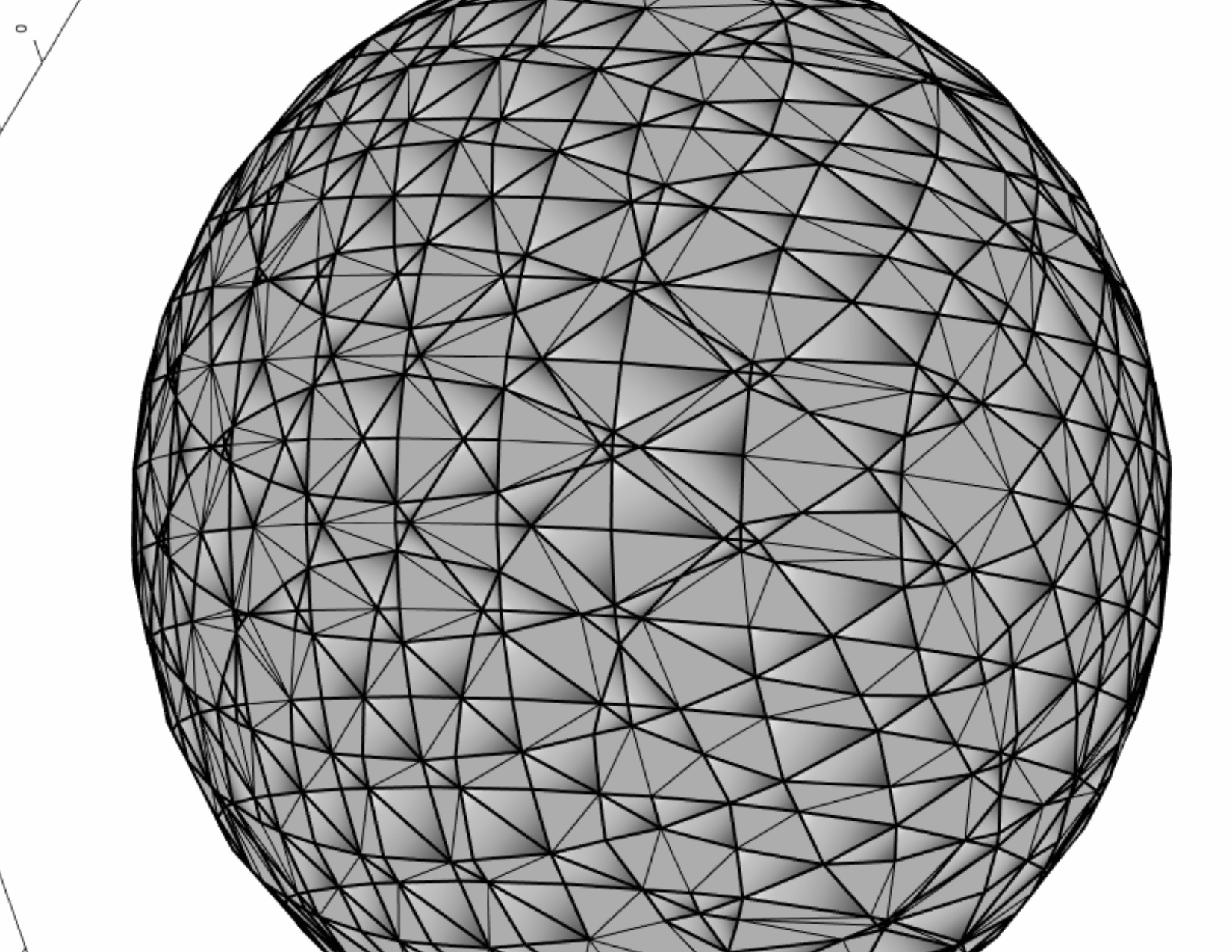}
\caption{Illustration of the high local variation of element diameters, aspect ratios, and orientations in a typical trace mesh.}
\label{fig2}
\end{figure}

\subsection{Interpolation operators}
\label{subsec:interp}

\begin{lemma}
Let $\tilde{\bQ} = \{\bp \in H({\rm div}_\Gamma; \Gamma) : \bp_T \in [H_1(T)]^3, \hbox{ all } T \in \F_h\}$.  There exists an interpolation operator $I_h:\tilde{\bQ} \rightarrow \bQh$ such that the {\rm commuting diagram property} holds:  
\begin{align}
\label{cdp}
{\rm div}_\Gamma \bp = \pi_h {\rm div}_\Gamma \bp, ~~ \bp \in \tilde{\bQ},
\end{align}
where $\pi_h:L_2(\Gamma) \rightarrow V_h$ is the $L_2$ projection.  
In addition, there hold the following error estimates for $T \in \F_h$ and $\bp, u$ possessing the requisite regularity:  
\begin{align}
\label{interp_bounds}
\begin{aligned}
\|\bp-I_h \bp\|_{L_2(T)} & \lesssim h^{k+1} \|\bp\|_{H^{\ell}(T)},
\\ \|u - \pi_h u\|_{L_2(T)} \lesssim h^{m} \|\bp\|_{H^j(T)}, 
\end{aligned}
\end{align}
where $1 \le \ell \le k+1$, and $0 \le m \le k$ for $BDM_k$ elements and $0 \le m \le k+1$ for $RT_k$ elements.  In addition we have for $BDM$ elements that
\begin{align}
\label{BDM_stable}
\|I_h \bp \|_{L_2(T)} \lesssim \|\bp\|_{L_2(T)} + h_T |\bp|_{H^1(T)}
\end{align}
and for Raviart-Thomas elements that
\begin{align}
\label{RT_stable}
\|I_h \bp\|_{L_2(T)} \lesssim \|\bp\|_{L_2(T)} + h_T |\bp|_{H^1(T)} + \|{\rm div}_\Gamma \bp\|_{L_2(T)}.
\end{align}
\end{lemma}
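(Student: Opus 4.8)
The plan is to build $I_h$ elementwise as the Piola pushforward of the classical canonical $BDM_k$ or $RT_k$ interpolation operator on the reference triangle, so that on each flat face the surface problem reduces to a planar one. Fix $T \in \F_h$ with affine map $A:\hat T \to T$ and derivative $\A$. Since every $\bp \in \tilde{\bQ}$ is tangent to $\Gamma$ and hence to the flat face $T$, the pullback $\hat\bp := |\A|\,\A^{+}(\bp\circ A)$, with $\A^{+}=(\A^\top\A)^{-1}\A^\top$ the Moore--Penrose inverse, is a well-defined vector field on $\hat T$ satisfying $\frac{1}{|\A|}\A\,\hat\bp=\bp|_T$; here I use that $\A\A^{+}$ acts as the identity on vectors tangent to $T$. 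I then set $(I_h\bp)|_T := \frac{1}{|\A|}\A\,(\hat I\hat\bp)\circ A^{-1}$, where $\hat I$ is the canonical reference interpolant. That $I_h\bp\in\bQh\subset H(\operatorname{div}_\Gamma;\Gamma)$ follows because the edge degrees of freedom defining $\hat I$ are moments of the conormal flux, and the affine Piola transform preserves these conormal moments across a shared edge even when the two incident faces are not coplanar, so the scalar conormal flux of $I_h\bp$ matches across interelement edges exactly as that of $\bp$ does.

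The commuting diagram property is the second step. For an affine map the contravariant Piola transform satisfies $\operatorname{div}_\Gamma\big(\frac{1}{|\A|}\A\,\hat\bq\circ A^{-1}\big)=\frac{1}{|\A|}(\widehat{\operatorname{div}}\,\hat\bq)\circ A^{-1}$, while the reference operators obey $\widehat{\operatorname{div}}\,\hat I\hat\bp=\hat\pi\,\widehat{\operatorname{div}}\hat\bp$, with $\hat\pi$ the reference $L_2$ projection. Because $|\A|$ is constant on each affine element, it commutes with the elementwise $L_2$ projection onto $V_h$; combining these facts gives $\operatorname{div}_\Gamma I_h\bp=\pi_h\operatorname{div}_\Gamma\bp$ on each $T$, and hence globally. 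This argument is purely algebraic and is insensitive to the shape of $T$.

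For the error and stability estimates the essential point is that, having reduced everything to the planar reference triangle, I can invoke the anisotropic interpolation theory already available in the literature. On each $T$ I express $\|\bp-I_h\bp\|_{L_2(T)}$ and $\|I_h\bp\|_{L_2(T)}$ in terms of the reference quantities $\|\hat\bp-\hat I\hat\bp\|_{L_2(\hat T)}$ and $\|\hat I\hat\bp\|_{L_2(\hat T)}$, controlling the Piola Jacobian factors by the element geometry. The estimates of \cite{AK20} for $BDM$ and of \cite{AADL11,AD99,DL08} for $RT$ furnish exactly these bounds on triangles satisfying the maximum angle condition (assumption~1 of Section~\ref{sec:mesh}); they are phrased in terms of directional derivatives along the element, which I simply dominate by the full $H^\ell$ seminorm times the appropriate power of $h$ to obtain the diameter-based estimates \eqref{interp_bounds}, \eqref{BDM_stable}, and \eqref{RT_stable}. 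The $h_T|\bp|_{H^1(T)}$ term in both stability bounds reflects the scaled trace inequality needed to make sense of the edge conormal moments, while the extra $\|\operatorname{div}_\Gamma\bp\|_{L_2(T)}$ term in \eqref{RT_stable} reflects the structure of the $RT$ interpolation on anisotropic elements as analyzed in the cited works. The scalar bound for $\|u-\pi_h u\|_{L_2(T)}$ is the standard anisotropic $L_2$-projection estimate of \cite{Apel99}.

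I expect the main obstacle to be organizational rather than conceptual: the anisotropic estimates must be quoted in a form whose hypotheses match precisely the maximum angle condition our trace mesh satisfies, and one must verify that the Piola transform on a flat triangle embedded in $\mathbb{R}^3$, where $\A$ may be arbitrarily ill-conditioned, introduces no hidden dependence on the aspect ratio beyond what the cited directional estimates already absorb. The resolution is that, because the fields in $\tilde{\bQ}$ are genuinely tangent to each flat face, the embedding contributes no curvature and the $3\times 2$ Piola map behaves exactly like its $2\times 2$ planar counterpart, so the cited results transfer essentially verbatim once the directional quantities are coarsened to the diameter $h$.
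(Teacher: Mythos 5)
The paper offers no proof of this lemma at all: it is presented as a compilation of known results, with the preceding paragraph citing \cite{AK20} for $BDM$ and \cite{AADL11,AD99,DL08} for $RT$ and noting that the directional anisotropic estimates there are simply coarsened to diameter-based bounds under the maximum angle condition, which is precisely the route your proposal takes. Your added details — the pseudoinverse Piola pullback using tangency of fields in $\tilde{\bQ}$ to each flat face, the matching of conormal edge moments across non-coplanar faces, and the commuting-diagram algebra using constancy of $|\A|$ on affine elements — are correct and simply make explicit what the paper delegates to the cited references.
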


We shall also use estimates for the Lagrange interpolant on meshes satisfying the maximum angle condition; cf. \cite{Apel99}.
Let $I_L$ be the Lagrange interpolant into the conforming elementwise $\mathbb{P}^1$ functions.  Then for $T \in \F_h$, 
\begin{align}
\label{lagrange_est}
\|u-I_L u\|_{L_2(T)} + h_T\|\nabla_\Gamma (u-I_L u)\|_{L_2(T)} \lesssim h_T^2 |u|_{H^2(T)}.  
\end{align}

\subsection{Finite element method}
Our finite element method is as follows.  Given $f_h \in L_2^0(\Gamma)$, we seek $(\bp_h, u_h ) \in \bQh \times V_h$ such that
\begin{align}
\label{mixed_fem}
\begin{aligned}
a_\Gamma (\bp_h, \bq_h) - b_\Gamma (\bq_h, u_h) & =0, ~~\bq_h \in \bQh, 
\\ b_\Gamma (\bp_h, v_h) & = (f_h, v_h), ~~v_h \in V_h,
\\ (u_h, 1)_{\Gamma} & =0.
\end{aligned}
\end{align}

\section{Error analysis}
\label{sec4} 
\subsection{Inf-sup condition}
\begin{lemma} There is a constant $\beta$ independent of $h$ such that for all $v_h \in V_h^0$, 
\begin{align}
\label{discrete_inf_sup}
\sup_{\bq_h \in \bQh} \frac{b_\Gamma (\bq_h, v_h)}{\|\bq_h\|_{H({\rm div}_\Gamma; \Gamma)}} \ge \beta \|v_h\|_{L_2(\Gamma)}.
\end{align}
\end{lemma}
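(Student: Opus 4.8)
The plan is to verify \eqref{discrete_inf_sup} by constructing, for each $v_h \in V_h^0$, an explicit supremizer $\bq_h \in \bQh$, i.e. to build a Fortin operator but only along the one-dimensional family of fields that is actually needed. The starting point is the continuous inf-sup condition on $\gamma$, furnished by elliptic regularity for the Laplace--Beltrami operator on the smooth closed surface $\gamma$: given mean-zero data $g \in L_2^0(\gamma)$, solving $-\Delta_\gamma \phi = g$ with $\int_\gamma \phi = 0$ and setting $\bp = \nabla_\gamma \phi$ produces a tangential field with ${\rm div}_\gamma \bp = g$ and $\|\bp\|_{H^1(\gamma)} \lesssim \|g\|_{L_2(\gamma)}$, since $\phi \in H^2(\gamma)$. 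The essential gain is that $\bp$ is globally $H^1$, not merely an element of $\hdivgam$.

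First I would fix $v_h \in V_h^0$, form its lift $v_h^\ell$ on $\gamma$, and correct its mean. Because $v_h$ has mean zero on $\Gamma$ but $\mu \neq 1$, the lift need not have mean zero on $\gamma$; writing $\int_\gamma v_h^\ell = \int_\Gamma \mu v_h = \int_\Gamma (\mu-1)\,v_h$ via \eqref{mu_def} and using $|1-\mu| \lesssim h^2$ from \eqref{mu_bound} shows the average $\bar{v}$ is $O(h^2 \|v_h\|_{L_2(\Gamma)})$. I would then set $g = v_h^\ell - \bar{v} \in L_2^0(\gamma)$, apply the continuous solve above to obtain $\bp \in \hdivgam \cap [H^1(\gamma)]^3$, Piola-transform to $\tbp = \piola_{\bP_d^{-1}}\bp$, and define $\bq_h = I_h \tbp$. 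The norm equivalence \eqref{norm_equiv} guarantees $\|\tbp\|_{[H_h^1(\Gamma)]^3} \lesssim \|\bp\|_{H^1(\gamma)}$, so $\tbp \in \tilde{\bQ}$ and $I_h \tbp$ is well defined.

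Next I would estimate numerator and denominator. For the numerator, the commuting diagram property \eqref{cdp} together with the fact that $\pi_h$ is the $L_2$-orthogonal projection onto $V_h \ni v_h$ gives $b_\Gamma(\bq_h, v_h) = \int_\Gamma v_h\, \pi_h {\rm div}_\Gamma \tbp = \int_\Gamma v_h\, {\rm div}_\Gamma \tbp$. Substituting ${\rm div}_\Gamma \tbp = \mu\,({\rm div}_\gamma \bp)^\ell = \mu(v_h - \bar{v})$ from \eqref{piola_div} and collapsing the integrals back to $\gamma$ by \eqref{mu_def} yields $b_\Gamma(\bq_h, v_h) = \|v_h^\ell - \bar{v}\|_{L_2(\gamma)}^2$; since $\bar{v} = O(h^2\|v_h\|_{L_2(\Gamma)})$ and $\|v_h^\ell\|_{L_2(\gamma)} \simeq \|v_h\|_{L_2(\Gamma)}$ by \eqref{norm_equiv}, this is bounded below by a constant times $\|v_h\|_{L_2(\Gamma)}^2$ once $h$ is small. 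For the denominator, the $L_2$ part of $\|\bq_h\|_{\hdivGam}$ is controlled by the interpolation stability estimate \eqref{BDM_stable} (or \eqref{RT_stable} in the $RT$ case), summed over elements with $h_T \le h$ and bounded by $\|\tbp\|_{[H_h^1(\Gamma)]^3} \lesssim \|v_h\|_{L_2(\Gamma)}$, while the divergence part is $\|\pi_h {\rm div}_\Gamma \tbp\|_{L_2(\Gamma)} \le \|{\rm div}_\Gamma \tbp\|_{L_2(\Gamma)} = \|\mu(v_h-\bar v)\|_{L_2(\Gamma)} \lesssim \|v_h\|_{L_2(\Gamma)}$ using \eqref{cdp} and \eqref{piola_div}. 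Dividing then produces the bound $\beta \|v_h\|_{L_2(\Gamma)}$ with $\bq_h$ as the test function.

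The main obstacle is the $H({\rm div})$-stability of the Fortin operator. The canonical interpolant $I_h$ is \emph{not} bounded on $\hdivGam$ as a whole: its stability bounds \eqref{BDM_stable}--\eqref{RT_stable} contain the term $h_T|\tbp|_{H^1(T)}$, which cannot be controlled by the $H({\rm div})$ norm alone, and on the anisotropic trace mesh the usual shape-regular interpolation theory is unavailable. The resolution, built into the construction above, is never to interpolate a generic $H({\rm div})$ field: elliptic regularity delivers a field that is globally $H^1$ with norm controlled by $\|v_h\|_{L_2(\Gamma)}$, and \eqref{norm_equiv} transfers this control to $\Gamma$, so that exactly the anisotropic $H^1$-based interpolation estimates of the previous lemma apply. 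A secondary, purely technical point is the $O(h^2)$ mean-value mismatch arising from $\mu \neq 1$, which only imposes a smallness-of-$h$ hypothesis and does not affect the structure of the argument.
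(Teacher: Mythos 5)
Your proof is correct and follows essentially the same route as the paper: construct the supremizer by solving a Laplace--Beltrami problem on $\gamma$ with data tied to $v_h$, invoke $H^2$ regularity, Piola-transform to $\Gamma$, and interpolate with $I_h$ so that the commuting diagram property \eqref{cdp} handles the numerator and the anisotropic stability bounds \eqref{BDM_stable}/\eqref{RT_stable} together with \eqref{norm_equiv} handle the denominator. The only difference is in bookkeeping: the paper takes the data to be $\frac{1}{\mu}v_h^\ell$, which by \eqref{mu_def} is automatically mean-zero on $\gamma$ and, via \eqref{piola_div}, yields ${\rm div}_\Gamma I_h \tbq = v_h$ exactly, whereas your additive mean correction leaves the $O(h^2)$ term $\bar v$ in the numerator and therefore imposes a (harmless) smallness-of-$h$ requirement that the paper's scaling avoids.
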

\begin{proof}
Given $v_h \in V_h^0$, let $z \in H^2(\gamma)$ solve $-\Delta_\gamma z= \frac{1}{\mu} v_h^\ell$.  Let also $\bq = \nabla_\gamma z$ so that ${\rm div}_\gamma \bq = \frac{1}{\mu} v_h^\ell$.  By $H^2$ regularity we also have $\|\bq\|_{H^1(\gamma)} \lesssim \|\frac{1}{\mu} v_h^\ell\|_{L_2(\gamma)}$.  Let $\tilde{\bq} = \piola_{\bP^{-1}} \bq$.  From \eqref{piola_div} we have that ${\rm div}_\Gamma \tilde{\bq} = v_h$, and from the commuting diagram property we thus deduce that ${\rm div}_\Gamma I_h \tilde{\bq} = v_h$.  Using the $H^1$ stability of $I_h$ \eqref{BDM_stable} or \eqref{RT_stable} as appropriate, norm equivalence \eqref{norm_equiv}, $H^2$ regularity, and finally \eqref{mu_bound} and \eqref{norm_equiv} we thus have that
\begin{align}
\label{eq400}
\begin{aligned}
\sup_{\bq_h \in \bQh} \frac{b_\Gamma (\bq_h, v_h)}{\|\bq_h\|_{H({\rm div}_\Gamma; \Gamma)}} &  \ge \frac{b_\Gamma ( I_h \tilde{\bq}, v_h)}{\|I_h \tilde{\bq}\|_{H({\rm div}_\Gamma; \Gamma)} } 
 = \frac{\|v_h\|_\Gamma^2}{\|\tbq\|_{H_h^1(\Gamma)}} 
\\ & \gtrsim \frac{\|v_h\|_\Gamma^2}{\|\bq\|_{[H^1(\gamma)]^3}}
 \gtrsim \frac{\|v_h\|_{\Gamma}^2}{ \|\frac{1}{\mu}v_h^\ell\|_{\gamma}}
 \gtrsim \|v_h\|_\Gamma
\end{aligned}
\end{align}
This completes the proof. 
\end{proof}

\subsection{Basic error estimates}
We first establish some relationships that will help us to encode geometric errors due to the approximation of $\gamma$ by $\Gamma$ when proving error bounds.  Using \eqref{b_equiv},\eqref{a_equiv}, along with \eqref{mixed_prob}, we first compute that for $\bp$ satisfying \eqref{mixed_prob} and $\bq \in \hdivgam$, 
\begin{align}
\label{a_pert}
a_\Gamma(\tbp, \tbq) - b_\Gamma(\tbq, u^\ell) = a_\Gamma(\tbp, \tbq)-a(\bp, \bq) \lesssim h^2 \|\tbp\|_\Gamma \|\tbq\|_\Gamma.
\end{align}
Note also that \eqref{piola_div} yields ${\rm div}_\Gamma \tbp= \mu f^\ell$, and the commuting diagram property then gives 
\begin{align} 
\label{interp_piola_div}
{\rm div}_\Gamma I_h \tbp= \pi_h (\mu f^\ell).
\end{align}
  In addition, \eqref{mu_def} yields $\int_\Gamma \mu f^\ell = \int_\gamma f =0$, so $f_h = \mu f^\ell$ is a natural choice of right hand side for the discrete problem.  However, this choice may not always be practical and so we do not assume it below.  Whatever our choice of $f_h \in L_2^0(\Gamma)$, these observations along with \eqref{mixed_fem} yield
\begin{align}
\label{b_div_ident}
b_\Gamma(I_h \tbp-\bp_h, v_h )= (\mu f^\ell-f_h, v_h), ~~v_h \in V_h.
\end{align}

We first prove a preliminary bound for $\|I_h \tbp- \bp_h\|_\Gamma$.  Here and below a norm with a domain subscript will denote an $L_2$ norm over that domain, e.g., $\|\cdot\|_{\Gamma}= \|\cdot \|_{L_2(\Gamma)}$.  
\begin{lemma}
Let $\ebp=I_h \tbp-\bp_h$, $e_u= \pi_h u^\ell -u_h$, and $\overline{e_u}= \frac{1}{|\Gamma|} \int_\Gamma e_u$.  Then 
\begin{align}
\label{up_error}
\|\ebp\|_\Gamma + \|e_u-\overline{e_u} \|_\Gamma \lesssim \|\tbp-I_h \tbp\|_\Gamma + \|\Pi-\B_h\|_{L_\infty(\Gamma)} \|\bp\|_\gamma + \|\mu f^\ell -f_h\|_{\Gamma}.
\end{align}
\end{lemma}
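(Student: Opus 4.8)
The plan is to derive an error equation for the pair $(\ebp, e_u)$ by subtracting the discrete equations \eqref{mixed_fem} from the (perturbed) continuous relations, and then to exploit the discrete inf-sup condition \eqref{discrete_inf_sup} to control the scalar error together with the coercivity of $a_\Gamma$ on the divergence-free subspace to control the vector error. The key structural fact I would use is that, by the commuting diagram property \eqref{interp_piola_div}, the quantity $\ebp = I_h \tbp - \bp_h$ has a controlled divergence: from \eqref{b_div_ident} we have $b_\Gamma(\ebp, v_h) = (\mu f^\ell - f_h, v_h)$ for all $v_h \in V_h$. This is exactly the mechanism that keeps $\ebp$ ``close to'' the divergence-free subspace on which $a_\Gamma$ is coercive.

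First I would write down the error equation in the $a_\Gamma$ slot. Testing \eqref{mixed_fem} with $\bq_h \in \bQh$ gives $a_\Gamma(\bp_h, \bq_h) = b_\Gamma(\bq_h, u_h)$, while the perturbed identity \eqref{a_pert} gives $a_\Gamma(\tbp, \bq_h) - b_\Gamma(\bq_h, u^\ell)$ equal to a geometric term bounded by $h^2\|\tbp\|_\Gamma\|\bq_h\|_\Gamma$ — more precisely, using \eqref{a_equiv}, this term is $a([\Pi - \B_h]\bp, \bq)$. Subtracting and inserting the interpolant, I obtain, for all $\bq_h \in \bQh$,
\begin{align}
\label{plan_err_eq}
a_\Gamma(\ebp, \bq_h) = a_\Gamma(I_h \tbp - \tbp, \bq_h) + a([\Pi - \B_h]\bp, \bq) + b_\Gamma(\bq_h, e_u),
\end{align}
where $\bq$ is the $\gamma$-counterpart (inverse Piola transform) of $\bq_h$. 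The plan for the scalar error is to feed \eqref{plan_err_eq} into the inf-sup condition \eqref{discrete_inf_sup}: since $e_u - \overline{e_u} \in V_h^0$, I can pick a test function $\bq_h$ realizing the supremum, move $b_\Gamma(\bq_h, e_u) = b_\Gamma(\bq_h, e_u - \overline{e_u})$ (the mean value drops out because $b_\Gamma(\bq_h, \text{const}) = \int_\Gamma \text{const}\cdot{\rm div}_\Gamma \bq_h = 0$ for $\bq_h$ tangential with the right boundary behavior, or is absorbed), and thereby bound $\|e_u - \overline{e_u}\|_\Gamma$ by $\|\ebp\|_\Gamma$ plus the two interpolation and geometric terms on the right of \eqref{up_error}.

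For the vector error I would test \eqref{plan_err_eq} with $\bq_h = \ebp$ itself. Because ${\rm div}_\Gamma \ebp$ is controlled by $\mu f^\ell - f_h$ through \eqref{b_div_ident}, the troublesome term $b_\Gamma(\ebp, e_u)$ becomes $(\mu f^\ell - f_h, e_u)$, which I would bound by $\|\mu f^\ell - f_h\|_\Gamma \|e_u - \overline{e_u}\|_\Gamma$ (again the mean value contributes nothing since $\mu f^\ell - f_h$ has zero average when $f_h \in L_2^0(\Gamma)$ and $\int_\Gamma \mu f^\ell = 0$). Then $a_\Gamma(\ebp, \ebp) = \|\ebp\|_\Gamma^2$ is bounded by $\|I_h\tbp - \tbp\|_\Gamma \|\ebp\|_\Gamma$, the geometric term $\|\Pi - \B_h\|_{L_\infty}\|\bp\|_\gamma \|\ebp\|_\Gamma$ (using \eqref{a_equiv} and norm equivalence \eqref{norm_equiv} so that $\|\bq\|_\gamma \simeq \|\ebp\|_\Gamma$), and the $f$-term. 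Dividing by $\|\ebp\|_\Gamma$ yields the bound on $\|\ebp\|_\Gamma$, and substituting back into the inf-sup estimate closes the bound on $\|e_u - \overline{e_u}\|_\Gamma$.

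\textbf{The main obstacle} I anticipate is the careful bookkeeping of the geometric perturbation term and the mean-value subtractions, rather than any deep difficulty. Specifically, I must ensure that when I pass between $\bq_h$ on $\Gamma$ and its inverse-Piola counterpart $\bq$ on $\gamma$ in \eqref{plan_err_eq}, the norm equivalence \eqref{norm_equiv} is applied correctly so that $\|\bq\|_\gamma \simeq \|\bq_h\|_\Gamma$ — this is where the estimate $\|\Pi - \B_h\|_{L_\infty} \lesssim h^2$ from \eqref{B_bound} is converted into the stated factor $\|\Pi - \B_h\|_{L_\infty(\Gamma)}\|\bp\|_\gamma$. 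I also have to verify that all mean-value terms genuinely vanish: this relies on $b_\Gamma(\bq_h, \text{const}) = 0$ and on the zero-average property of $\mu f^\ell - f_h$, so that replacing $e_u$ by $e_u - \overline{e_u}$ throughout is legitimate. Once these reductions are in place, combining the coercivity estimate for $\|\ebp\|_\Gamma$ with the inf-sup estimate for $\|e_u - \overline{e_u}\|_\Gamma$ gives \eqref{up_error} after a standard absorption argument.
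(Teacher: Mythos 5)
Your proposal is correct and follows essentially the same route as the paper's proof: an error identity obtained from \eqref{mixed_fem}, \eqref{mixed_prob}, \eqref{a_equiv} and \eqref{b_equiv}, an energy argument testing with $\ebp$ in which \eqref{b_div_ident} and the zero-mean property of $\mu f^\ell - f_h$ convert the coupling term into $(\mu f^\ell - f_h, e_u - \overline{e_u})$, the inf-sup condition \eqref{discrete_inf_sup} to bound $\|e_u - \overline{e_u}\|_\Gamma$ by $\|\ebp\|_\Gamma$ plus interpolation and geometric terms, and a final combination step. One small caution: you cannot literally ``divide by $\|\ebp\|_\Gamma$'' in the energy estimate, since the $f$-term carries a factor $\|e_u - \overline{e_u}\|_\Gamma$ rather than $\|\ebp\|_\Gamma$; the circular dependence must be resolved as in your closing remark (and as in the paper) by substituting the inf-sup bound into the energy bound and absorbing via Young's inequality.
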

\begin{proof}
  We first compute that 
\begin{align}
\begin{aligned}
\|\ebp\|_\Gamma^2 & = a_\Gamma(I_h \tbp-\tbp, e_\bp) + a_\Gamma(\tbp-\bp_h, \ebp) &  
\\ & =  a_\Gamma(I_h \tbp-\tbp, \ebp) + [a_\Gamma(\tbp, \ebp)-a (\bp, \widetilde{\ebp})] + a(\bp, \widetilde{\ebp}) -b_\Gamma(\ebp, u_h) &  \eqref{mixed_fem}
\\ & = a_\Gamma(I_h \tbp-\tbp, \ebp) + a([\B_h-\Pi] \bp, \widetilde{\ebp})+ b(\widetilde{\ebp}, u)-b_\Gamma(\ebp, u_h)& \eqref{mixed_prob}, \eqref{a_pert}
\\ & = a_\Gamma(I_h \tbp-\tbp, \ebp) + a([\B_h-\Pi] \bp, \widetilde{\ebp})+ b_\Gamma(\ebp, u^\ell)-b_\Gamma(\ebp, u_h) &  \eqref{b_equiv}
\\ & = a_\Gamma(I_h \tbp-\tbp, \ebp) + a([\B_h-\Pi] \bp, \widetilde{\ebp})+ b_\Gamma(\ebp, e_u) & \hbox{definition of } \pi_h
\\ & = a_\Gamma(I_h \tbp-\tbp, \ebp) + a([\B_h-\Pi] \bp, \widetilde{\ebp})+ (\mu f^\ell-f_h, e_u)& \eqref{b_div_ident}
\\ & = a_\Gamma(I_h \tbp-\tbp, \ebp) + a([\B_h-\Pi] \bp, \widetilde{\ebp})+ (\mu f^\ell-f_h, e_u-\overline{e_u}) &  \int_\Gamma \mu f^\ell=\int_\Gamma f_h=0
\\ & \le [C\|I_h \tbp-\tbp, \ebp\|_\Gamma^2 + \frac{1}{4} \|\ebp\|_\Gamma^2] + [ C \|\Pi-\B_h\|_{L_\infty(\Gamma)}^2 \|\bp\|_{\gamma}^2 + \frac{1}{4} \|\ebp\|_\Gamma^2] & 
\\ & ~~~~+ \|\mu f^\ell - f_h\|_\Gamma \|e_u-\overline{e_u}\|_{\Gamma}. & 
\end{aligned}
\end{align}
Reabsorbing the terms of $\|\ebp\|_{\Gamma}^2$ back into the left hand side yields
\begin{align}
\label{intermediate_ep}
\|\ebp\|_{\Gamma}^2  \lesssim \|\tbp-I_h \tbp\|_{\Gamma}^2 + \|\Pi-\B_h\|_{L_\infty(\Gamma)}^2 \|\bp\|_\gamma^2 + \|\mu f^\ell -f_h\|_{\Gamma} \|e_u\|_{\Gamma}.
\end{align}

We next bound for $\|e_u-\overline{e_u}\|_{\Gamma}$.  By the inf-sup condition \eqref{discrete_inf_sup} there is $\bq_h \in \bQh$ with $\|\bq_h\|_{\hdivGam}=1$ such that
\begin{align}
\label{begin_eu}
\|e_u-\overline{e_u}\|_\Gamma \lesssim b_\Gamma(\bq_h, e_u-\overline{e_u}) = b_\Gamma(\bq_h, e_u).
\end{align}
Using \eqref{mixed_fem}, \eqref{b_equiv}, \eqref{mixed_prob}, the definition of $\pi_h$, \eqref{a_pert}, and norm equivalence \eqref{norm_equiv} we then compute
\begin{align}
\begin{aligned}
b_\Gamma(\bq_h, e_u) & = b_\Gamma(\bq_h, u^\ell)-b_\Gamma(\bq_h, u_h)
\\ & = b(\widetilde{\bq_h}, u)-b_\Gamma (\bq_h, u_h)
\\ & = a(\bp,\tbpq)-a_\Gamma(\tbp, \bq_h) + a_\Gamma(\tbp-\bp_h, \bq_h)
\\ & \lesssim \|\Pi-\B_h\|_{L_\infty(\Gamma)} \|\bp\|_{\gamma} \|\tbpq\|_\gamma + \|\tbp-\bp_h\|_{\Gamma} \|\bq_h\|_\Gamma
\\ & \lesssim (\|\Pi-\B_h\|_{L_\infty(\Gamma)} \|\bp\|_\gamma + \|\tbp-I_h \tbp\|_\Gamma + \|\ebp\|_\Gamma) \|\bq_h\|_\Gamma.
\end{aligned}
\end{align}
Recalling that $\|\bq_h\|_{\hdivGam} =1$ and inserting into \eqref{begin_eu} yields
\begin{align}
\label{intermediate_eu}
\|e_u-\overline{e_u}\| \lesssim \|\Pi-\B_h\|_{L_\infty(\Gamma)} \|\bp\|_\gamma + \|\tbp-I_h \tbp\|_\Gamma + \|\ebp\|_\Gamma.
\end{align}

Next we insert \eqref{intermediate_eu} into \eqref{intermediate_ep} and apply Young's inequality to obtain
\begin{align}
\begin{aligned}
\|\ebp\|_\Gamma^2 & \le C \Big [ \|\tbp-I_h \tbp\|_\Gamma^2 + \|\Pi-\B_h\|_{L_\infty(\Gamma)}^2 \|\bp\|_\gamma^2 
\\ & ~~~~+ \|\mu f^\ell-f_h\|_\Gamma ( \|\Pi-\B_h\|_{L_\infty(\Gamma)} \|\bp\|_\gamma + \|\tbp-I_h \tbp\|_\Gamma + \|\ebp\|_\Gamma)
 \Big ] 
 \\ & \le C \left [ \|\tbp-I_h \tbp\|_\Gamma^2 + \|\Pi-\B_h\|_{L_\infty(\Gamma)}^2 \|\bp\|_\gamma^2 + \|\mu f^\ell - f_h\|_{\Gamma}^2 \right ] + \frac{1}{2} \|\ebp\|_\Gamma^2.
 \end{aligned}
\end{align}
Absorbing the last term into the left hand side and taking a square root completes the proof of the first estimate in \eqref{up_error}.  Inserting this result into \eqref{intermediate_eu} completes the proof of the bound for the second term.
\end{proof}

\subsection{Superconvergence in the scalar variable}
We next prove a classical superconvergence estimate for $e_u$ which yields a higher order of convergence than \eqref{up_error} when Raviart-Thomas elements are employed.
\begin{lemma}
\begin{align}
\label{super_eu}
\begin{aligned}
\|e_u\|_\Gamma & \lesssim  h \|\tbp-\bp_h\|_\Gamma +h \|\mu f^\ell -\pi_h (\mu f^\ell)\|_{\Gamma}  
\\ & ~~~~+  \|\Pi-\B_h\|_{L_\infty(\Gamma)} \|\bp\|_\gamma + \|\mu f^\ell -f_h\|_\Gamma + \|1-\mu\|_{L_\infty(\Gamma)} \|u\|_\gamma.  
\end{aligned} 
\end{align}
Thus for either of the element choices $BDM_1$ or $RT_0$ we have
\begin{align}
\label{super_eu2}
\|e_u\|_\Gamma \lesssim h^2.
\end{align}
\end{lemma}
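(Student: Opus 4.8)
The plan is to prove \eqref{super_eu} by a mixed-method duality (Aubin--Nitsche) argument and then read off \eqref{super_eu2} by inserting the known convergence rates. First I would split $e_u = (e_u - \overline{e_u}) + \overline{e_u}$ and dispose of the mean quickly. Since $\int_\Gamma u_h = 0$ and $\pi_h$ preserves integrals (constants lie in $V_h$), $\int_\Gamma e_u = \int_\Gamma u^\ell$, while \eqref{mu_def} gives $\int_\Gamma u^\ell \mu = \int_\gamma u = 0$; hence $\int_\Gamma u^\ell = \int_\Gamma u^\ell(1-\mu)$ and $\|\overline{e_u}\|_\Gamma \lesssim \|1-\mu\|_{L_\infty(\Gamma)}\|u\|_\gamma$, which is the last term in \eqref{super_eu}. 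It then remains to bound $w := e_u - \overline{e_u} \in V_h^0$.

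For the duality step I would mimic the inf-sup proof: let $z \in H^2(\gamma) \cap L_2^0(\gamma)$ solve $-\Delta_\gamma z = \frac1\mu w^\ell$, which is solvable because $\int_\gamma \frac1\mu w^\ell = \int_\Gamma w = 0$ by \eqref{mu_def}; set $\bq = \nabla_\gamma z$ and $\tbq = \piola_{\bP_d^{-1}} \bq$, so that \eqref{piola_div} yields ${\rm div}_\Gamma \tbq = w$, and $H^2$-regularity together with \eqref{mu_bound} and \eqref{norm_equiv} gives $\|\bq\|_{H^1(\gamma)} \lesssim \|w\|_\Gamma$. Because $w$ has mean zero, $\|w\|_\Gamma^2 = b_\Gamma(\tbq, w) = b_\Gamma(\tbq, e_u)$; and since ${\rm div}_\Gamma(I_h\tbq - \tbq) = \pi_h w - w = 0$ by the commuting diagram property \eqref{cdp}, the non-discrete test function may be replaced by its interpolant, $\|w\|_\Gamma^2 = b_\Gamma(I_h\tbq, e_u)$. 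I would then substitute $\bq_h = I_h\tbq$ into the error identity $b_\Gamma(\bq_h, e_u) = a([\Pi - \B_h]\bp, \widetilde{\bq_h}) + a_\Gamma(\tbp - \bp_h, \bq_h)$ obtained (via \eqref{a_equiv}) exactly as in the proof of the previous lemma. Its first term is immediately controlled by $\|\Pi - \B_h\|_{L_\infty(\Gamma)}\|\bp\|_\gamma\|w\|_\Gamma$ after using $\|\widetilde{I_h\tbq}\|_\gamma \simeq \|I_h\tbq\|_\Gamma \lesssim \|w\|_\Gamma$, the latter from the $H^1$-stability \eqref{BDM_stable}/\eqref{RT_stable} and \eqref{norm_equiv}.

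The crux is the remaining term $a_\Gamma(\tbp - \bp_h, I_h\tbq)$, which I would split as $a_\Gamma(\tbp - \bp_h, \tbq) + a_\Gamma(\tbp - \bp_h, I_h\tbq - \tbq)$. The second summand is harmless: $\|I_h\tbq - \tbq\|_\Gamma \lesssim h\|\bq\|_{H^1(\gamma)} \lesssim h\|w\|_\Gamma$ by \eqref{interp_bounds} and \eqref{norm_equiv}, producing the term $h\|\tbp - \bp_h\|_\Gamma$. The genuine obstacle is $a_\Gamma(\tbp - \bp_h, \tbq)$, since $\tbq$ is \emph{not} a finite element function and the discrete equations cannot be applied to it directly. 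Here I would invoke \eqref{piola_dif} to replace $\tbq$ by the true tangential gradient $\nabla_\Gamma z^\ell$ up to an $O(h^2)$ error, contributing only $h^2\|\tbp - \bp_h\|_\Gamma\|w\|_\Gamma$, and then integrate by parts over the closed surface: since $\tbp - \bp_h \in \hdivGam$ and $z^\ell \in H^1(\Gamma)$, there are no interelement or boundary contributions and $a_\Gamma(\tbp - \bp_h, \nabla_\Gamma z^\ell) = -b_\Gamma(\tbp - \bp_h, z^\ell)$. This replacement of the dual flux by a genuine gradient, which unlocks integration by parts, is the step where the superconvergent extra power of $h$ is actually gained.

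Finally I would estimate $b_\Gamma(\tbp - \bp_h, z^\ell)$ by splitting $\tbp - \bp_h = (\tbp - I_h\tbp) + \ebp$. For the interpolation part, ${\rm div}_\Gamma(\tbp - I_h\tbp) = \mu f^\ell - \pi_h(\mu f^\ell)$ is $L_2(\Gamma)$-orthogonal to $V_h$, so subtracting $\pi_h z^\ell$ gives $\int_\Gamma(z^\ell - \pi_h z^\ell)(\mu f^\ell - \pi_h(\mu f^\ell)) \lesssim h\|w\|_\Gamma\|\mu f^\ell - \pi_h(\mu f^\ell)\|_\Gamma$, which is the source of the term $h\|\mu f^\ell - \pi_h(\mu f^\ell)\|_\Gamma$. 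For the remaining part, ${\rm div}_\Gamma\ebp = \pi_h(\mu f^\ell - f_h)$ by \eqref{interp_piola_div} and ${\rm div}_\Gamma\bp_h = \pi_h f_h$, so moving one projection gives $\int_\Gamma \pi_h z^\ell(\mu f^\ell - f_h) \lesssim \|w\|_\Gamma\|\mu f^\ell - f_h\|_\Gamma$. Collecting all contributions, dividing by $\|w\|_\Gamma$, and adding back the mean bound proves \eqref{super_eu}. For \eqref{super_eu2} I would then note that for $BDM_1$ or $RT_0$ every term is $O(h^2)$: $h\|\tbp - \bp_h\|_\Gamma \lesssim h^2$ (via \eqref{interp_bounds} and \eqref{up_error}, the flux being first order for $RT_0$ and second order for $BDM_1$), $h\|\mu f^\ell - \pi_h(\mu f^\ell)\|_\Gamma \lesssim h^2$ since $V_h \supseteq \mathbb{P}_0$, $\|\Pi - \B_h\|_{L_\infty(\Gamma)}\|\bp\|_\gamma \lesssim h^2$ by \eqref{B_bound}, $\|1-\mu\|_{L_\infty(\Gamma)}\|u\|_\gamma \lesssim h^2$ by \eqref{mu_bound}, and $\|\mu f^\ell - f_h\|_\Gamma \lesssim h^2$ for the natural data choice $f_h = \mu f^\ell$ or any $O(h^2)$ approximation of it.
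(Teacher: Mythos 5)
Your proof is correct, and at the structural level it coincides with the paper's: the same dual problem with $H^2$ regularity, the same use of the commuting diagram property \eqref{cdp} to replace the dual flux by its interpolant, the same splitting into a geometric term, an interpolation term, and a principal term, and the same argument for the mean $\overline{e_u}$ via $\int_\Gamma \mu u^\ell = \int_\gamma u = 0$. The one genuine difference is how you convert the crux term $a_\Gamma(\tbp-\bp_h,\tbq)$ into a divergence pairing. The paper pulls this term back to the exact surface: it perturbs the form via \eqref{a_equiv} at cost $\|\Pi-\B_h\|_{L_\infty(\Gamma)}\|\tbp-\bp_h\|_\Gamma\|\bz\|_\gamma$, integrates by parts on $\gamma$ using $\bz=\nabla_\gamma \phi$, and returns to $\Gamma$ through the exact identity \eqref{b_equiv}. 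You instead stay on the discrete surface: you replace $\tbq$ by $\nabla_\Gamma z^\ell$ via the pointwise estimate \eqref{piola_dif} at cost $h^2\|\tbp-\bp_h\|_\Gamma\|e_u-\overline{e_u}\|_\Gamma$, and integrate by parts on the closed surface $\Gamma$, which is legitimate since $\tbp-\bp_h\in\hdivGam$ and $z^\ell\in H^1(\Gamma)$. Both perturbations are dominated by the $h\|\tbp-\bp_h\|_\Gamma$ term already present, so the two routes yield the same bound \eqref{super_eu}; the paper's version reuses machinery already needed for the basic estimates, while yours borrows \eqref{piola_dif}, which the paper otherwise reserves for the postprocessing analysis, and has the conceptual appeal of making explicit that integration by parts on $\Gamma$ itself is where the extra power of $h$ is gained. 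Your final bookkeeping (splitting $\tbp-\bp_h=(\tbp-I_h\tbp)+\ebp$, using \eqref{interp_piola_div} and $L_2$-orthogonality) recovers exactly the paper's data terms $h\|\mu f^\ell-\pi_h(\mu f^\ell)\|_\Gamma$ and $\|\mu f^\ell-f_h\|_\Gamma$. Two cosmetic points: with $\bq=\nabla_\gamma z$ and $-\Delta_\gamma z=\frac{1}{\mu}(e_u-\overline{e_u})^\ell$, identity \eqref{piola_div} actually gives ${\rm div}_\Gamma\tbq=-(e_u-\overline{e_u})$, so a sign must be flipped somewhere (the paper's own proof contains the identical slip); and, as you correctly flag, \eqref{super_eu2} tacitly requires $f$ elementwise in $H^1$ and a data choice with $\|\mu f^\ell-f_h\|_\Gamma\lesssim h^2$.
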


\begin{proof}
Let $\phi \in H^2(\gamma) \cap L_2^0(\gamma)$ satisfy $-\Delta_\gamma \phi = \frac{1}{\mu} (e_u-\overline{e_u})^\ell$.  Then by standard $H^2$ regularity results, \eqref{mu_bound}, and norm equivalence \eqref{norm_equiv}, 
\begin{align}
\label{eu_reg}
\|\phi\|_{H^2(\gamma)} \lesssim \|\frac{1}{\mu}(e_u-\overline{e_u})^\ell\|_\gamma \lesssim \|e_u-\overline{e_u}\|_\Gamma.
\end{align}
Let also $\bz= \nabla_\gamma \phi$.  Then ${\rm div}_\gamma \bz = -\frac{1}{\mu}(e_u-\overline{e_u})^\ell$.  By the commuting diagram property \eqref{cdp} and property \eqref{piola_div} of the Piola transform we than have ${\rm div}_\Gamma I_h \tbz = \pi_h {\rm div}_\Gamma \tbz = e_u-\overline{e_u}$.  Thus recalling the definition $e_u=\pi_h u^\ell -u_h$ and that $\pi_h$ is the $L_2$ projection, we compute
\begin{align}
\label{eu_1}
\|e_u-\overline{e_u}\|_\Gamma^2 = b_\Gamma(I_h \tbz, e_u-\overline{e_u}) = b_\Gamma (I_h \tbz, e_u) = b_\Gamma (I_h \tbz, u^\ell -u_h).
\end{align}
Using the definition of $u_h$ via the mixed FEM \eqref{mixed_fem} and the transformation identity \eqref{b_equiv}, we next compute after inserting $\pm a_\Gamma(\tbp, I_h \tbz)$ that
\begin{align}
\label{eu_2}
\begin{aligned}
b_\Gamma& (I_h \tbz, u^\ell) -b_\Gamma (I_h \tbz, u_h) = b(\widetilde{I_h \tbz}, u) - a_\Gamma (\bp_h, I_h \tbz)
\\ & = a(\bp, \widetilde{I_h \tbz}) - a_\Gamma(\tbp, I_h \tbz)+ a_\Gamma(\tbp, I_h \bz)-a_\Gamma(\bp_h, I_h \tbz)
\\ & \le \|\Pi-\B_h\|_{L_\infty(\Gamma)} \|\bp \|_{\gamma} \|I_h \tbz\|_\Gamma + |a_\Gamma(\tbp-\bp_h, I_h \tbz-\tbz)| + |a_\Gamma (\tbp-\bp_h, \tbz)|
\\ & \lesssim \|\Pi-\B_h\|_{L_\infty(\Gamma)} \|\bp \|_{\gamma} \|I_h \tbz\|_\Gamma +h \|\tbp-\bp_h\|_\Gamma \|\tbz\|_{H^1(\Gamma)} + |a_\Gamma (\tbp-\bp_h, \tbz)|.
\end{aligned}
\end{align}
We finally compute that
\begin{align}
\label{eu_3}
\begin{aligned}
a_\Gamma(\tbp-\bp_h, \tbz) & = a_\Gamma(\tbp-\bp_h, \tbz)-a(\bp-\tbph, \bz) + a(\bp-\tbph, \bz)
\\ & = a_\Gamma(\tbp-\bp_h, \tbz)-a(\bp-\tbph, \bz) + b(\bp-\tbph, \phi)
\\ & = a_\Gamma(\tbp-\bp_h, \tbz)-a(\bp-\tbph, \bz) + b_\Gamma(\tbp-\bp_h, \phi^\ell)
\\ & =a_\Gamma(\tbp-\bp_h, \tbz)-a(\bp-\tbph, \bz)+ (\mu f^\ell - \pi_h f_h, \phi^\ell)_\Gamma
\\ & = a_\Gamma(\tbp-\bp_h, \tbz)-a(\bp-\tbph, \bz) + (\mu f^\ell - \pi_h (\mu f^\ell), \phi^\ell-\pi_h \phi^\ell)_\Gamma 
\\ & ~~~~+ (\pi_h (\mu f^\ell - f_h), \phi^\ell)
\\ & \lesssim \|\Pi - \B_h\|_{L_\infty(\Gamma)} \|\tbp-\bp_h\|_\Gamma  \|\bz\|_\gamma 
\\ & ~~~~+ \|\phi\|_{H^1(\gamma)}  \left [h \|\mu f^\ell -\pi_h (\mu f^\ell)\|_{\Gamma} + \|\mu f^\ell -f_h\|_\Gamma \right ].
\end{aligned}
\end{align}

Combining \eqref{eu_reg} through \eqref{eu_3} with \eqref{interp_bounds} then yields
\begin{align}
\begin{aligned}
\|e_u-\overline{e_u}\|_\Gamma^2 & \lesssim \|\phi\|_{H^2(\gamma)} \big [ \|\Pi-\B_h\|_{L_\infty(\Gamma)} \|\bp\|_\gamma + h \|\tbp-\bp_h\|_\Gamma 
\\ & ~~~~+ h \|\mu f^\ell -\pi_h (\mu f^\ell)\|_{\Gamma} + \|\mu f^\ell -f_h\|_\Gamma \big ] 
\\ & \lesssim \|e_u-\overline{e_u}\|_\Gamma \big [ \|\Pi-\B_h\|_{L_\infty(\Gamma)} \|\bp\|_\gamma + h \|\tbp-\bp_h\|_\Gamma 
\\ & ~~~~+ h \|\mu f^\ell -\pi_h (\mu f^\ell)\|_{\Gamma} + \|\mu f^\ell -f_h\|_\Gamma \big ].
\end{aligned}
\end{align}
Dividing through by $\|e_u-\overline{e_u}\|_\Gamma$ completes the proof of \eqref{super_eu} up to a term $\overline{e_u}$ remaining on the left hand side.  To complete the proof we recall that $0=\int_\Gamma u_h = \int_\gamma u = \int_\Gamma \mu u^\ell$.  Thus $|\Gamma| \overline{e_u} = \int_\Gamma (\pi_h u^\ell-u_h) = \int_\Gamma u^\ell = \int_\Gamma (1-\mu) u^\ell \le \|1-\mu\|_{L_\infty(\Gamma)} \|u\|_\gamma$, which completes the proof after application of the triangle inequality.
\end{proof}

 \subsection{Summary of convergence results}
Collecting the previous error estimates while recalling the approximation results \eqref{interp_bounds}  and recalling the geometric error bound \eqref{B_bound} easily yields the following.
\begin{theorem}[Summary of convergence results]
\label{theorem1}
Assume that $f_h$ is defined so that $\|\mu f^\ell -f_h\| \lesssim h^2\|f\|_\gamma$ and $u \in H^{k+2}(\gamma)$.   Then for $RT_k$ elements, $k \ge 0$, we have
\begin{align}
\|e_u\|_{\Gamma} + h (\|\tbp-\bp_h\|_{\Gamma} + \|u^\ell -u_h\|_{\Gamma}) \lesssim h^{k+2} \|u\|_{H^{k+1}(\gamma)}  + \G(h^2) \|f\|_\gamma,
\end{align}
where $\G(h^2) \lesssim h^2$ is a geometric error term arising from the approximation of $\gamma$ by $\Gamma$.  In the case of $BDM_k$ elements we instead have
\begin{align}
\|e_u\|_\Gamma + \|\tbp-\bp_h\|_\Gamma + h \|u^\ell -u_h\|_\Gamma \lesssim h^{k+1} \|u\|_{H^{k+1}(\gamma)} + \G(h^2) \|f\|_\gamma.
\end{align}
\end{theorem}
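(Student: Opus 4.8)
The plan is to assemble the theorem directly from the three ingredients already established: the preliminary bound \eqref{up_error}, the superconvergence estimate \eqref{super_eu}, and the interpolation estimates \eqref{interp_bounds}, collecting all $O(h^2)$ data-dependent contributions into the single geometric term $\G(h^2)\|f\|_\gamma$. First I would fix that geometric term once and for all. The three quantities $\|\Pi-\B_h\|_{L_\infty(\Gamma)}\|\bp\|_\gamma$, $\|\mu f^\ell-f_h\|_\Gamma$, and $\|1-\mu\|_{L_\infty(\Gamma)}\|u\|_\gamma$ are exactly the error contributions appearing on the right of \eqref{up_error} and \eqref{super_eu} that are not controlled by interpolation. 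By \eqref{B_bound} the first is $\lesssim h^2\|\bp\|_\gamma$, by hypothesis the second is $\lesssim h^2\|f\|_\gamma$, and by \eqref{mu_bound} the third is $\lesssim h^2\|u\|_\gamma$. Invoking $H^2$ elliptic regularity for \eqref{mixed_prob}, so that $\|\bp\|_\gamma+\|u\|_\gamma\lesssim\|f\|_\gamma$, all three collapse into $\G(h^2)\|f\|_\gamma$ with $\G(h^2)\lesssim h^2$, and thereafter I treat them uniformly as geometric error.

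Next I would handle the vector error, which has the same order for both families. Writing $\tbp-\bp_h=(\tbp-I_h\tbp)+\ebp$ and bounding $\|\ebp\|_\Gamma$ by \eqref{up_error} gives $\|\tbp-\bp_h\|_\Gamma\lesssim\|\tbp-I_h\tbp\|_\Gamma+\G(h^2)\|f\|_\gamma$. Summing the first estimate in \eqref{interp_bounds} over $T\in\F_h$ and transferring to $\gamma$ through the norm equivalence \eqref{norm_equiv} together with $\bp=-\nabla_\gamma u\in[H^{k+1}(\gamma)]^3$ (available from the assumed regularity) yields $\|\tbp-I_h\tbp\|_\Gamma\lesssim h^{k+1}\|\bp\|_{H^{k+1}(\gamma)}$. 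Because both the $RT_k$ and the $BDM_k$ vector reference spaces reproduce $[\mathbb P_k]^2$, this $h^{k+1}$ rate is common to both, which is why the final vector estimate appears with prefactor $h$ in the $RT$ bound and with prefactor $1$ in the $BDM$ bound.

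The scalar estimates are where the two families diverge, and this is the step needing the most care. For $\|e_u\|_\Gamma$ I would apply \eqref{super_eu} directly: the term $h\|\tbp-\bp_h\|_\Gamma$ is $\lesssim h^{k+2}$ by the vector bound, and the decisive term is $h\|\mu f^\ell-\pi_h(\mu f^\ell)\|_\Gamma$. Here the polynomial degree of the scalar space enters: for $RT_k$ the scalar space is $\mathbb P_k$, so the second estimate in \eqref{interp_bounds} gives projection rate $h^{k+1}$ and hence $h\cdot h^{k+1}=h^{k+2}$, whereas for $BDM_k$ the scalar space is only $\mathbb P_{k-1}$, giving rate $h^k$ and hence $h\cdot h^k=h^{k+1}$. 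Adding the geometric term yields $\|e_u\|_\Gamma\lesssim h^{k+2}+\G(h^2)\|f\|_\gamma$ for $RT$ and $\|e_u\|_\Gamma\lesssim h^{k+1}+\G(h^2)\|f\|_\gamma$ for $BDM$. The full scalar error then follows from $u^\ell-u_h=(u^\ell-\pi_h u^\ell)+e_u$, bounding $\|u^\ell-\pi_h u^\ell\|_\Gamma$ again by \eqref{interp_bounds} ($h^{k+1}$ for $RT$, $h^k$ for $BDM$) and multiplying by $h$; collecting all pieces produces the two asserted inequalities.

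I would flag that no genuinely hard obstacle remains: the statement is essentially a bookkeeping consolidation of the lemmas above, and the only subtlety is tracking how the different scalar polynomial degrees in $RT_k$ versus $BDM_k$ produce the differing orders. The one technical point to confirm is that the higher-order versions of \eqref{interp_bounds} and the Piola norm equivalence \eqref{norm_equiv} are invoked correctly in their $k\ge 1$ form on the anisotropic mesh, which is legitimate via the anisotropic interpolation theory cited for \eqref{interp_bounds} and the regularity hypothesis $u\in H^{k+2}(\gamma)$.
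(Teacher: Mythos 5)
Your proposal is correct and follows essentially the same route as the paper: the paper's proof is literally the single sentence that collecting \eqref{up_error}, \eqref{super_eu}, \eqref{interp_bounds}, and \eqref{B_bound} "easily yields" the result, and your assembly---splitting $\tbp-\bp_h$ via $I_h\tbp$, feeding the vector bound into \eqref{super_eu}, and absorbing the $\|\Pi-\B_h\|_{L_\infty(\Gamma)}$, $\|1-\mu\|_{L_\infty(\Gamma)}$, and $\|\mu f^\ell-f_h\|_\Gamma$ contributions into $\G(h^2)\|f\|_\gamma$ via the a priori bound $\|\bp\|_\gamma+\|u\|_\gamma\lesssim\|f\|_\gamma$---is exactly that collection, worked out in detail. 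The one caveat (shared with the paper's own statement, so not a gap relative to its proof) is that bounding $h\|\mu f^\ell-\pi_h(\mu f^\ell)\|_\Gamma$ at rate $h^{k+2}$ for $RT_k$ requires $f$ to lie elementwise in $H^{k+1}$, which does not follow from $u\in H^{k+2}(\gamma)$ alone; the paper makes this extra regularity assumption explicit only later, in Theorem \ref{theorem2}.
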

 
\subsection{Superconvergent postprocessing}

An important feature of standard mixed and also hybridizable discontinuous Galerkin methods is the ability to define a postprocessed solution to the pressure (scalar) variable that converges at a rate one order higher than the original scalar approximation; cf. \cite{Sten91}.  Such estimates rely on the superconvergence of the finite element solution to the $L_2$ projection of the continuous solution proved in the preceding subsections.  Superconvergent postprocessing techniques for surface mixed and HDG FEM are discussed in \cite{CD16} and numerical results given, but no proofs were given.  In this section we prove error estimates for such postprocessings that include the effects of geometric errors. To our knowledge such proofs are lacking in the literature for surface FEM even assuming a shape regular triangulation.  The anisotropic meshes that we consider here also introduce some modest additional technical complications.  

We first define two well-known postprocessing techniques.  We consider $RT_0$ and $BDM1$ elements so that the scalar finite element space consists of piecewise constants, but  extension to higher-order elements is immediate after replacing $\mathbb{P}_1$ below with local polynomials of one degree higher than the scalar space in the given mixed method.  However, due to the presence of geometric errors there is no benefit to postprocessing higher-order solutions when using affine surface approximations as we do here and so we consider only the simplest case.  

Here and below we fix a triangle $T \in \T_h$.   Let $\mathbb{P}_1^0 = \{v \in \mathbb{P}_1(T): \int_T v = 0\}$.  For our first technique we seek $u_h^* \in \mathbb{P}_1(T)$ such that
\begin{align}
\label{post_def}
\begin{aligned}
\int_T \nabla_\Gamma u_h^* \nabla_\Gamma v_h & = \int_T f_h^* v_h - \int_{\partial T} \bp_h\cdot {\bf n}_T v_h, ~v_h \in \mathbb{P}_1^0,
\\  \int_T u_h^* = \int_T u_h.
\end{aligned}
\end{align}
Here ${\bf n}_T$ is the outward-pointing unit conormal on $\partial T$, and $f_h^*$ is an approximation to $f^\ell$ that is not required to be the same as $f_h$.  Alternately we may replace the first equation above by
\begin{align}
\label{post_def2}
\int_T \nabla_\Gamma u_h^* \nabla_\Gamma v_h & = -\int_T \bp_h \cdot \nabla_\Gamma v_h, ~~v_h \in \mathbb{P}_1^0.
\end{align}

\begin{theorem}[Superconvergent postprocessing]
\label{theorem2}
Let $I_L$ be the standard Lagrange interpolant into the piecewise $\mathbb{P}_1$ elements on $\F_h$.  For both of the above postprocessing techniques and any element choice, we have
\begin{align}
\label{post_result}
\begin{aligned}
\|u^\ell -u_h^*\|_\Gamma & \lesssim \left [ \|u^\ell -I_L u^\ell\|_\Gamma +  h\|\nabla_\Gamma (u^\ell -I_L u^\ell)\|_\Gamma
+  h \|\tbp - I_h \tbp\|_\Gamma + h \|\mu f^\ell - \pi_h (\mu f^\ell)\|_\Gamma  \right ] 
\\ & ~~~~~+ \left [ \|\Pi-\B_h\|_{L_\infty(\Gamma)} \|\bp\|_\gamma + \|1-\mu\|_{L_\infty(\Gamma)} \|u\|_\gamma + \|\mu f^\ell - f_h \|_\Gamma \right ]
\\ & ~~~~+ \left [ h\|\nabla_\Gamma u^\ell + \tbp\|_T  + h^2 \|f_h^* -\pi_h f_h\|_T  \right ] 
\\ & := I+II+III.
\end{aligned}
\end{align}
Thus if $\mu f^\ell$ is elementwise in $H^1$ or equivalently $f$ is elementwise in $H^1$, $f_h$ approximates $\mu f^\ell$ to order $h^2$, $\|f_h^*\|_\Gamma \lesssim \|f\|_\Gamma$,  and we employ $RT_0$ or $BDM_1$ elements, we have
\begin{align}
\label{postproc_order}
\|u^\ell - u_h^*\|_T \lesssim C h^2 ( \|u\|_{H^2(\gamma)}+ \|f\|_{H_h^1(\gamma)}).  
\end{align}
\end{theorem}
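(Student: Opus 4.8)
The plan is to bound $u^\ell - u_h^*$ on a fixed element $T$ by the classical Stenberg postprocessing argument, adapted to the surface and anisotropic setting. First I would insert the Lagrange interpolant, writing $u^\ell - u_h^* = (u^\ell - I_L u^\ell) + \chi$ with $\chi = I_L u^\ell - u_h^* \in \mathbb{P}_1(T)$; the Lagrange term $\|u^\ell - I_L u^\ell\|_\Gamma$ is already one of the terms in $I$ via \eqref{lagrange_est}. Decomposing $\chi = \bar\chi + \chi^0$ into its elementwise mean $\bar\chi$ and oscillation $\chi^0 \in \mathbb{P}_1^0$, I would estimate the two pieces separately. Well-posedness of $u_h^*$ is immediate since $\nabla_\Gamma$ is injective on $\mathbb{P}_1^0$ and the mean is fixed by the constraint $\int_T u_h^* = \int_T u_h$.

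For the mean part I would use that constraint together with $\int_T(u^\ell - \pi_h u^\ell) = 0$ (as $\pi_h$ is the $L_2$ projection onto elementwise constants) and $u_h = \pi_h u^\ell - e_u$ to obtain $|T|\,\bar\chi = \int_T(I_L u^\ell - u^\ell) + \int_T e_u$, whence $\|\bar\chi\|_T \lesssim \|u^\ell - I_L u^\ell\|_T + \|e_u\|_T$ by Cauchy--Schwarz. The first term again lies in $I$; the essential input here is the superconvergence estimate \eqref{super_eu}, whose right-hand side is controlled exactly by the combination of terms appearing in $I$ and $II$ once $\|\tbp - \bp_h\|_\Gamma$ and $\|\ebp\|_\Gamma$ are further reduced through \eqref{up_error} (using $h \le 1$ so that the extra $h$-weighted geometric factors are absorbed).

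For the oscillation I would test the defining relation with $v_h = \chi^0$. In the case \eqref{post_def} I would first apply the divergence theorem on the flat triangle $T$; since ${\rm div}_\Gamma \bp_h = \pi_h f_h$ is constant on $T$ (from \eqref{mixed_fem} and ${\rm div}_\Gamma \bQh \subset V_h$) and $\chi^0$ has mean zero, the boundary term collapses to $\int_T (f_h^* - \pi_h f_h)\chi^0 - \int_T \bp_h \cdot \nabla_\Gamma \chi^0$, so that \eqref{post_def} and \eqref{post_def2} become identical up to the single term $\int_T (f_h^* - \pi_h f_h)\chi^0$. Then $\|\nabla_\Gamma \chi\|_T^2 = \int_T(\nabla_\Gamma I_L u^\ell + \bp_h)\cdot\nabla_\Gamma\chi^0$ (plus that extra term), and I would insert $\pm\nabla_\Gamma u^\ell$ and $\pm\tbp$ to write the integrand as $\nabla_\Gamma(I_L u^\ell - u^\ell) + (\nabla_\Gamma u^\ell + \tbp) + (\bp_h - \tbp)$. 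The middle contribution is the source of the $h\|\nabla_\Gamma u^\ell + \tbp\|_T$ term in $III$ and is controlled by the Piola consistency estimate \eqref{piola_dif}; Cauchy--Schwarz and division by $\|\nabla_\Gamma\chi\|_T$ bound $\|\nabla_\Gamma\chi\|_T$ by the Lagrange-gradient term, the difference $\tbp - \bp_h$, and $h\|f_h^* - \pi_h f_h\|_T$. A Poincaré inequality then converts this into the $\|\chi^0\|_T$ bound, yielding the $h$-weighted term of $I$, the $h\|\tbp - \bp_h\|_T$ contribution (handled as before via $\|\tbp - I_h\tbp\| + \|\ebp\|$ and \eqref{up_error}), and the $h^2\|f_h^* - \pi_h f_h\|_T$ term of $III$.

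The hard part will be the Poincaré step on the oscillation: on the highly anisotropic triangles of $\F_h$ a naive scaling argument would bring the element aspect ratio into the constant and destroy the estimate. I would resolve this by invoking the Payne--Weinberger bound $\|\chi^0\|_T \le \tfrac{\operatorname{diam}(T)}{\pi}\,\|\nabla_\Gamma\chi^0\|_T$, which holds for any convex set and hence uniformly in the aspect ratio of a triangle; this is precisely the ``modest technical complication'' flagged earlier. Assembling the mean and oscillation bounds and summing over $T$ (with $h_T \le h$, and noting that the inf-sup-controlled quantities $\|e_u\|$ and $\|\ebp\|$ are only available globally, which is why $I$ and $II$ carry $\Gamma$-norms while $III$ stays local) gives \eqref{post_result}. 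Finally, \eqref{postproc_order} follows by substituting the interpolation estimates \eqref{interp_bounds} and \eqref{lagrange_est}, the geometric bounds \eqref{B_bound}, \eqref{mu_bound}, and \eqref{piola_dif}, and the hypotheses on $f_h$ and $f_h^*$, and checking term by term that each is $O(h^2)$ for $RT_0$ and $BDM_1$.
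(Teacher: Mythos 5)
Your proposal follows essentially the same route as the paper's proof: the same splitting into the Lagrange interpolation error, the elementwise mean (controlled via the constraint $\int_T u_h^* = \int_T u_h$ together with the superconvergence estimate \eqref{super_eu} and \eqref{up_error}), and the mean-zero oscillation (controlled by testing the postprocessing relations, applying the divergence theorem with ${\rm div}_\Gamma \bp_h = \pi_h f_h$, inserting $\pm\tbp$ and $\pm\nabla_\Gamma u^\ell$ so that \eqref{piola_dif} produces the $h\|\nabla_\Gamma u^\ell + \tbp\|_T$ term, and finishing with an elementwise Poincar\'e inequality). Your explicit appeal to the Payne--Weinberger bound $\|\chi^0\|_T \le \frac{{\rm diam}(T)}{\pi}\|\nabla_\Gamma \chi^0\|_T$ to make the Poincar\'e constant aspect-ratio independent is a careful justification of a step the paper uses without comment, but it is the same argument, not a different one.
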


\begin{remark}  
\label{rem1}
We remark on a subtlety in the structure of the geometric errors in the postprocessing estimate \eqref{post_result}.  Term $I$ consists of approximation terms which would essentially also be present in the Euclidean case.  Term $II$ consists of geometric error terms of order $h^2$ that arise from the definition of the original mixed solution $(\bp_h, u_h)$, but these terms {\it do not arise from the definition of the postprocessed solution}.  Term $III$ consists of data approximation and geometric error terms arising from the postprocessing technique.  Note that by \eqref{piola_dif} $III=O(h^3)$ under reasonable assumptions on $f_h^*$, even on affine surface approximations such as we assume here.   That is, the geometric errors present in the original solution are present with the same order in the postprocessed solution, but the geometric errors arising from the postprocessing technique itself are actually of higher order.  Consider for example the case of a parametric finite element solution using $BDM_2$ elements and with the finite element method defined on the exact surface $\gamma$ instead of on the approximation $\Gamma$; the error form above is then still valid.  No geometric errors are then present in the definition of $(\bp_h, u_h)$, so we expect $O(h^3)$ convergence in the vector variable and in $e_u$ and $O(h^2)$ in $u-u_h$; cf. \cite{CD16}.  In this case $II$ disappears from \eqref{post_result}, indicating that the postprocessed solution will converge with order $h^3$ {\it even if the postprocessing itself is defined on the affine surface approximation $\Gamma$}.  We illustrate this phenomenon in our numerical experiments below.  
\end{remark}

\begin{proof}
Let $I_{L}$ be the standard Lagrange interpolant onto $\mathbb{P}_1(T)$, and let $e_T=I_h u^\ell - u_h^*$.  We first carry out an auxiliary calculation in order to bound the mean value of $I_L u^\ell - u_h^*$ on $T$.  Let $e_T= I_L u^\ell - u_h^*$ and $\overline{e_T}= \frac{1}{|T|} \int_T e_T$.  Using $\int_T u^\ell = \int_T \pi_h u^\ell$ and \eqref{super_eu2}, we compute
\begin{align}
\label{mean_bound}
\begin{aligned}
\|\overline{e_T} \|_T  &\le \frac{1}{|T|} |T|^{1/2} \left |\int_T (I_L u^\ell - u^\ell + u^\ell - u_h^*) \right |
\\ & \le |T|^{-1/2} |T|^{1/2} \|I_L u^\ell - u^\ell\|_T + |T|^{-1/2} \left |\int_T( \pi_h u^\ell -u_h^*) \right | 
\\ & = \|I_L u^\ell - u^\ell\|_T + |T|^{-1/2} \left |\int_T \pi_h u^\ell -u_h \right | 
\\ & \le \|I_L u^\ell -u^\ell \|_T + |T|^{-1/2} |T|^{1/2} \|e_u\|_T
\\ & = \|I_L u^\ell -u^\ell\|_T + \|e_u \|_T.
\end{aligned}
\end{align}
Employing a Poincar\'e inequality and \eqref{mean_bound}, we then have
\begin{align}
\label{eq498}
\begin{aligned}
\|u^\ell-u_h^*\|_T & \le \|u^\ell -I_L u^\ell \|_T + \|e_T-\overline{e_T}\|_T + \|\overline{e_T}\|_T
\\ & \lesssim \|u^\ell - I_L u^\ell \|_T + \|e_u\|_T + h_T \|\nabla_\Gamma e_T\|_T.
\end{aligned} 
\end{align}

Next we compute that
\begin{align}
\label{eq499}
\begin{aligned}
\| \nabla e_T\|_T^2 & = \int_T \nabla_\Gamma e_T \cdot \nabla_\Gamma e_T
\\ & = \int_T \nabla_\Gamma (I_L u^\ell - u^\ell) \cdot \nabla_\Gamma e_T + \int_T \nabla_\Gamma (u^\ell - u_h^*) \cdot \nabla_\Gamma e_T
\\ & \le \|\nabla_\Gamma (u^\ell - I_L u^\ell)\|_T \|\nabla e_T\|_T + \left | \int_T \nabla_\Gamma (u^\ell - u_h^*) \cdot \nabla_\Gamma e_T \right|.
\end{aligned}
\end{align}
Note that $\nabla_\Gamma e_T  =\nabla_\Gamma (e_T-\overline{e_T})$, and in addition $e_T-\overline{e_T} \in \mathbb{P}_1^0$.

Considering now the first postprocessing technique \eqref{post_def}, we employ the first line of that definition to obtain
\begin{align}
\label{eq500}
\begin{aligned}
\int_T \nabla_\Gamma  (u^\ell - u_h^*) \cdot \nabla_\Gamma e_T & = \int_T \nabla_\Gamma u^\ell \cdot \nabla_\Gamma e_T - \int_T f_h^* (e_T-\overline{e_T})
\\& ~~~~+ \int_{\partial T} \bp_h \cdot {\bf n }_T (e_T - \overline{e_T}).
\end{aligned}
\end{align}
We first write
\begin{align}
\label{eq501}
\begin{aligned} 
\int_T \nabla_\Gamma u^\ell \cdot \nabla_\Gamma e_T & = \int_T [\nabla_\Gamma u^\ell+\tbp] \cdot \nabla_\Gamma e_T - \int_T \tbp \cdot \nabla_\Gamma e_T.
\end{aligned}
\end{align}
Also, applying the divergence theorem yields
\begin{align}
\label{eq502}
\begin{aligned} 
\int_{\partial T} \bp_h \cdot {\bf n }_T (e_T - \overline{e_T}) 
 = \int_T ( e_T- \overline{e_T}){\rm div}_\Gamma \bp_h + \int_T \bp_h \cdot \nabla_\Gamma e_T.
\end{aligned} 
\end{align}
Using ${\rm div}_\Gamma \bp_h = \pi_h f_h$ and collecting \eqref{eq501} and \eqref{eq502} into \eqref{eq500} and finally employing a Poincar\'e inequality then yields
\begin{align}
\label{eq503}
\begin{aligned}
\int_T & \nabla_\Gamma(u^\ell  - u_h^*)\cdot \nabla_\Gamma e_T =\int_T [ \nabla_\Gamma u^\ell + \tbp] \cdot \nabla_\Gamma e_T
\\ & ~~~~+  \int_T (\pi_h f_h -f_h^*) (e_T-\overline{e_T})  - \int_T (\tbp-\bp_h) \cdot \nabla_\Gamma e_T
\\ & \le \|\nabla_\Gamma u^\ell + \tbp\|_T \|\nabla_\Gamma e_T\|_T + \|f_h^* -\pi_h f_h\|_T \|e_T-\overline{e_T}\|_T 
\\ & ~~~~+ \|\tbp-\bp_h\|_T \|\nabla_\Gamma e_T\|_T
\\ & \lesssim  (\|\nabla_\Gamma u^\ell + \tbp\|_T  + h_T \|f_h^* -\pi_h f_h\|_T  + \|\tbp-\bp_h\|_T) \|\nabla_\Gamma e_T\|_T.
\end{aligned}
\end{align}
We now insert \eqref{eq503} into \eqref{eq499} and then cancel a factor of $\|\nabla_\Gamma e_T\|_T$ to obtain
\begin{align}
\label{eq504}
\begin{aligned}
\|\nabla_\Gamma e_T\|_T & \lesssim \|\nabla_\Gamma (u^\ell -I_h u^\ell)\|_T + \|\nabla_\Gamma u^\ell + \tbp\|_T 
\\ & ~~~~+ h_T \|f_h^* -\pi_h f_h\|_T + \|\tbp-\bp_h\|_T.
\end{aligned}
\end{align}
Finally inserting \eqref{eq504} into \eqref{eq498} then yields
\begin{align}
\label{eq506}
\begin{aligned}
\|u^\ell -u_h^*\|_T & \lesssim \|u^\ell -I_L u^\ell\|_T + \|e_u\|_T + h_T\|\nabla_\Gamma (u^\ell -I_L u^\ell)\|_T 
\\ & ~~~~+ h_T\|\nabla_\Gamma u^\ell + \tbp\|_T  + h_T^2 \|f_h^* -\pi_h f_h\|_T + h_T \|\tbp-\bp_h\|_T.
\end{aligned}
\end{align}

Assume now that we have used \eqref{post_def2} instead of \eqref{post_def} in order to define $u_h^*$.  In place of \eqref{eq500} we then have
\begin{align}
\label{eq505}
\begin{aligned}
\int_T \nabla_\Gamma (u^\ell -u_h^*) \cdot \nabla_\Gamma e_T & = \int_T ( \nabla_\Gamma u^\ell + \bp_h) \cdot \nabla_\Gamma e_T
\\ & = \int_T (\nabla_\Gamma u^\ell + \tbp -\tbp + \bp_h) \cdot \nabla_\Gamma e_T
\\ & \le (\|\nabla_\Gamma u^\ell + \tbp\|_T + \|\tbp-\bp_h\|_T) \|\nabla_\Gamma e_T\|_T.
\end{aligned}
\end{align}
Inserting \eqref{eq505} into \eqref{eq499}, dividing through by $\|\nabla_\Gamma e_T\|_T$, and inserting the result into \eqref{eq498} then yields
\begin{align}
\label{eq507}
\begin{aligned}
\|u^\ell -u_h^*\|_T & \lesssim \|u^\ell-I_L u^\ell \|_T + \|e_u\|_T +  h_T\|\nabla_\Gamma (u^\ell -I_L u^\ell)\|_T
\\ & ~~~~+ h_T ( \|\nabla_\Gamma u^\ell + \tbp\|_T + \|\tbp-\bp_h\|_T).
\end{aligned}
\end{align}
The bound \eqref{post_result} may be obtained by inserting \eqref{super_eu} and \eqref{up_error} into \eqref{eq506} or \eqref{eq507} as appropriate, while \eqref{postproc_order} follows from \eqref{mu_bound}, \eqref{B_bound}, \eqref{piola_dif}, and the estimates in Subsection \ref{subsec:interp}.  
\end{proof}

\section{Numerical results}
\label{sec5}

\subsection{Description of setup}
In this section we provide numerical experiments to illustrate our theoretical results.  

In our numerical experiments we took $\gamma$ to the unit sphere and our test solution to be $u(x,y,z)=sin(x)+y+z^3$.  We tested our method with the lowest-order finite element spaces $RT_0$ and $BDM_1$.  We also carried out experiments using the hybridizable discontinuous Galerkin space $HDG_1$ (see more discussion below).  The postprocessing technique \eqref{post_def} was used in all experiments, with computations carried out on the discrete surface $\Gamma$.  

In our numerical experiments practical solution of the finite element system \eqref{mixed_fem} is carried out by a hybridization procedure.   Such techniques are standard for mixed FEM and lead to a positive semidefinite system (or positive definite once the mean value condition $\int_\Gamma u_h=0$ is accounted for).   See for example Chapter 7 of \cite{BBF13}; hybridization for surface FEM is also discussed in the context of hybridizable DG methods in \cite{CD16}.  Roughly speaking, hybridizable methods eliminate elementwise defined degrees of freedom by condensing global system information into edgewise defined piecewise polynomial spaces of multipliers.  The global system matrix is defined with respect to these edge degrees of freedom, and is positive semidefinite (or positive definite once the mean value zero condition is accounted for).  The solutions $u_h$ and $\bp_h$ are recovered by elementwise operations after the global system is solved.  The edge multipliers also serve as approximations of the solution $u$.  

Conditioning and preconditioning of hybridized mixed systems in the Euclidean context is discussed in \cite{Gop03}, where it is shown that the condition number is roughly equivalent to that of a corresponding system matrix for a standard formulation.  In \cite{ORX12} the properties of the stiffness matrix for a standard Lagrange quasi-trace method for the Laplace-Beltrami problem are studied.  The smallest eigenvalue of the stiffness matrix is zero due to the mean value zero condition, so the effective condition number (ratio of the largest to the second eigenvalue) is considered and observed to be arbitrarily poor depending on the placement of the nodes of the bulk mesh $\T_h$ relative to the surface $\gamma$.  Given the similarity between the standard stiffness matrix and the hybrid mixed system matrix discussed in \cite{Gop03}, we expect a similar degeneration in our system matrix, and that is indeed what is observed in practice.  Thus preconditioned iterative methods are not straightforward to implement, and we instead used direct solvers (MATLAB ``backslash'').   Such solvers are adequate for problems of moderate size, but as pointed out in \cite{ORX12} it would be desirable to find a more efficient solver technology.   We focus mainly on error behavior and do not consider solver issues further here, but they remain an important outstanding issue in the practical implementation of quasi-trace methods.   We do note that the behavior of errors in our methods was stable with respect to node placement and effective condition number.  

\subsection{Experiment 1}
In our first experiment we consider the error behavior of our method using $RT_0$ and $BDM_1$ elements; see Figure \ref{fig3} below.  As expected from Theorem \ref{theorem1}, the lowest-order Raviart Thomas elements yield order of convergence $h$ for $\|u^\ell -u_h\|_\gamma$ and $\|\tbp-\bp_h\|_\gamma$, and we also obtain order $h^2$ convergence for $\|u^\ell -u_h^*\|_\gamma$ as expected from Theorem \ref{theorem2}.  We obtain the same orders of convergence for the lowest order $BMD_1$ elements, except for in the vector variable $\|\tbp-\bp_h\|_\gamma$ which converges with order $h^2$ (also as predicted).

\setlength{\unitlength}{.75cm}
\begin{figure}[h]
\centering
\includegraphics[scale=.4]{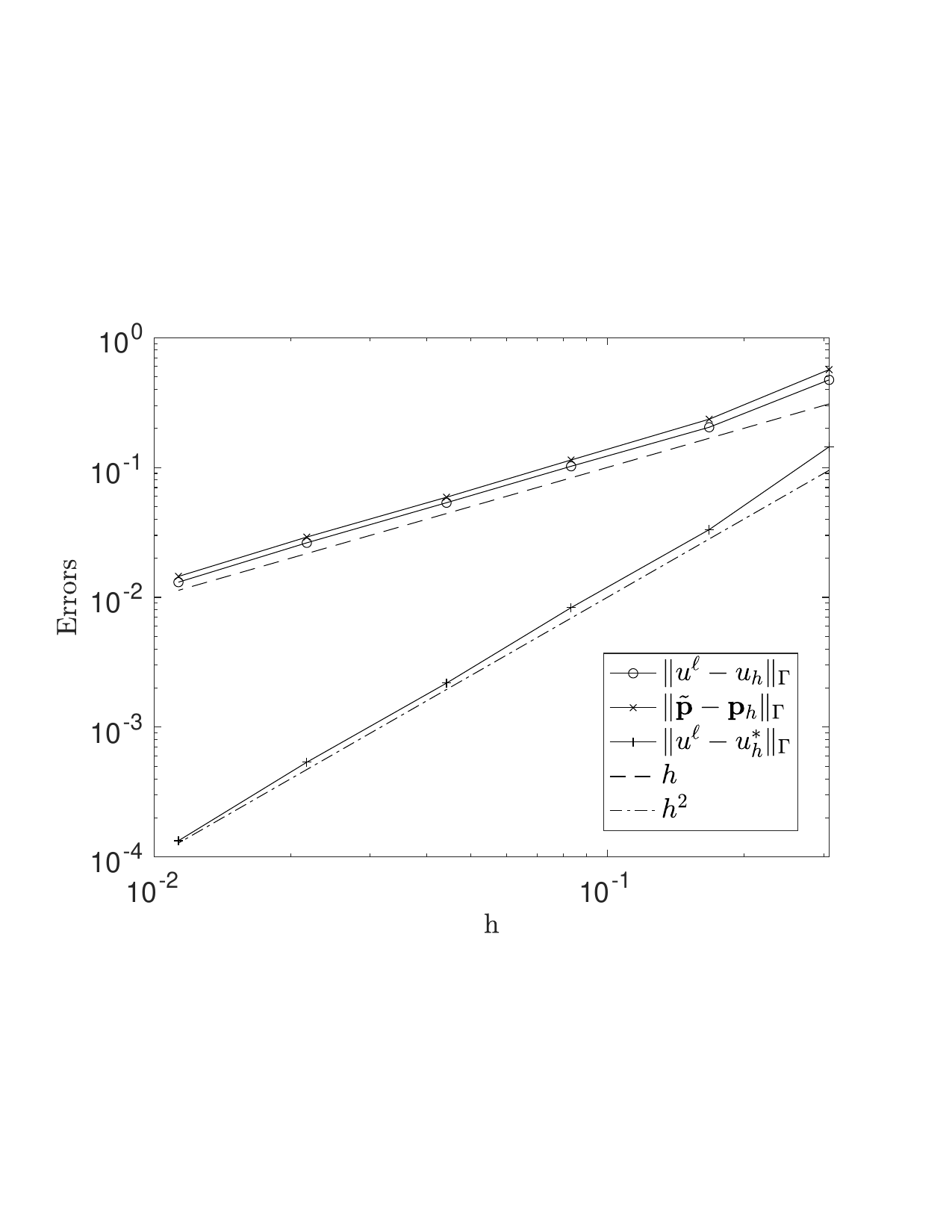}
\includegraphics[scale=.4]{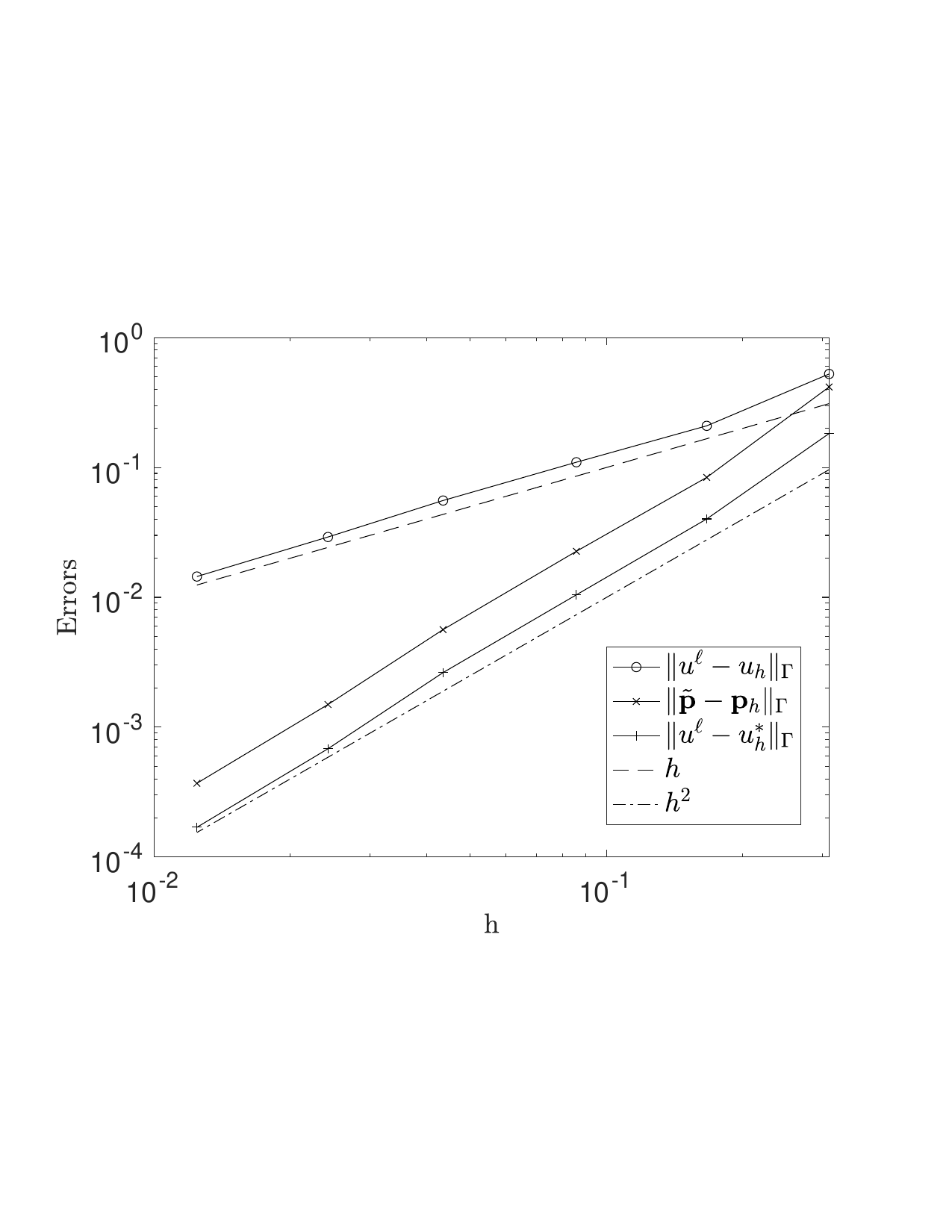}
\caption{Error behavior for $RT_0$ (left) and $BDM_1$ elements (right).}
\label{fig3}
\end{figure}

\subsection{Experiment 2}
In our second experiment we illustrate the use of hybridizable discontinuous Galerkin (HDG) methods and also illustrate the observation made in Remark \ref{rem1} that the geometric errors induced in the postprocessing procedures \eqref{post_def} and \eqref{post_def2} are of order $h^3$.  Surface HDG methods generalize surface mixed methods in their hybridized form and are described and analyzed in \cite{CD16}.  HDG spaces consist of the full vector polynomials of a given degree to approximate the vector variable $\bp$, the full scalar polynomials of the same degree to approximate the scalar variable $u$, and an interelement multiplier of the same degree as in the hybridized form of mixed methods.  Below we consider the space $HDG_1$, where all three of these spaces consist of affine polynomials.  The analysis of $HDG$ methods in \cite{CD16} includes optimal-order error estimates for $\|\tbp-\bp_h\|_\Gamma$ and $\|u^\ell-u_h\|_\gamma$ and a superconvergent estimate for $\|\pi_h u^\ell -u_h\|_\Gamma$ as above, but under the assumption of quasi-uniform meshes.  These error estimates could be extended to quasi-trace methods by proving suitable approximation properties for a certain HDG projection operator on anisotropic elements; this is possible but beyond the scope of this paper.   Assuming these estimates, the postprocessing analysis of Theorem \ref{theorem2} can be directly applied to HDG methods.  

We carried out two sets of computations using the $\mathbb{P}_1$ HDG space $HDG_1$.  In the first we defined all forms (integrals) needed in the method on the discrete surface $\Gamma$ as above.  Based on the analysis of \cite{CD16} and the postprocessing error estimates above, this leads us to expect an order of convergence of $h^2$ in $\|\tbp-\bp_h\|_\Gamma$, $\|u^\ell -u_h\|_\Gamma$, and $\|u^\ell -u_h^*\|_\Gamma$.  These orders of convergence are observed in Figure \ref{fig4}.  In our second set of computations we defined a {\it parametric} $HDG_1$ method in which all forms were defined on lifts of elements in $\F_h$ to the continuous surface $\gamma$; see \cite{CD16} for more explanation.  This eliminates geometric errors in the definition of $u_h$ and $\bp_h$ (some quadrature errors arise because integrals are computed over curved surfaces, but these are easy to control by using high-order quadrature rules).  The postprocessing procedure \eqref{post_def} was however still defined on the {\it discrete} surface $\Gamma$.  This procedure in effect eliminates the $O(h^2)$ geometric error terms $II$ from the postprocessing error estimate \eqref{post_result} above, leaving only the terms $I$ (approximation errors) and $III$ (geometric and data approximation errors).  The result is an $O(h^3)$ postprocessing error as noted in Remark \ref{rem1}, which we observe in Figure \ref{fig4} below.  

\setlength{\unitlength}{.75cm}
\begin{figure}[h]
\centering
\includegraphics[scale=.5]{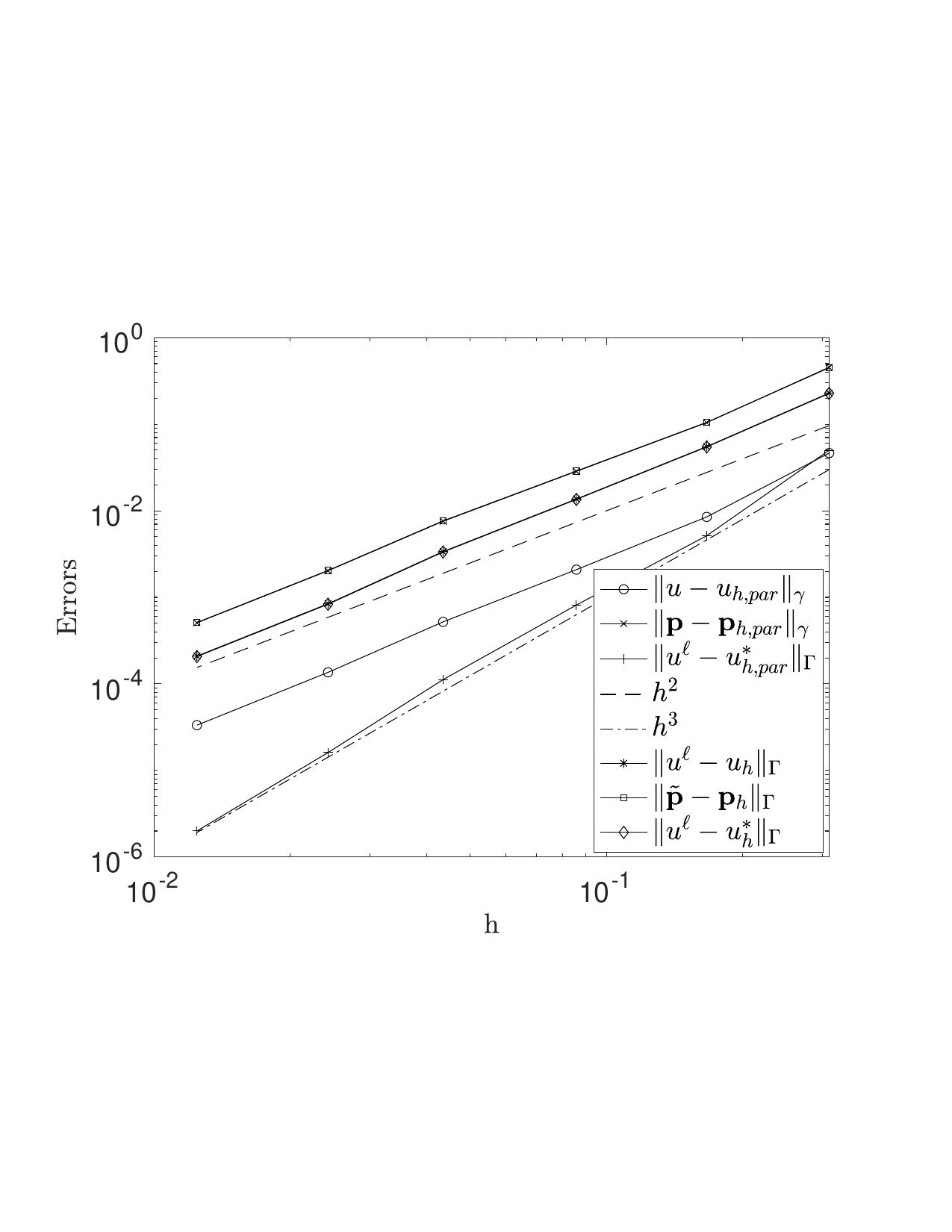}
\caption{Error behavior for $HDG_1$ elements using a parametric method ($\bp_{h, par}$, $u_{h,par}$, $u_{h,par}^*$) and the method defined on $\Gamma$ ($\bp_h$, $u_h$, $u_h^*$).}
\label{fig4}
\end{figure}

%
%

\begin{funding}
 This work was partially supported by NSF grant DMS-2012136. 
\end{funding}

\bibliographystyle{siam}
\bibliography{quasi_mixed}

\end{document}